\documentclass[12pt,reqno]{amsart}
\usepackage{amsmath,amsthm,amsfonts,amssymb,amscd,amstext}
\usepackage[latin1]{inputenc}
\usepackage[dvips]{graphicx}
\usepackage{psfrag}
\usepackage{euscript}
\usepackage{a4wide}




\theoremstyle{plain} \newtheorem{maintheorem}{Theorem} 
 \newtheorem{theorem}{Theorem}[section] \newtheorem{proposition}[theorem]{Proposition}
\newtheorem{corollary}[theorem]{Corollary} \newtheorem{lemma}[theorem]{Lemma}  
\theoremstyle{definition} \newtheorem{remark}[theorem]{Remark} \newtheorem{definition}{Definition}

 \newcommand{\RR}{{\mathbb R}}

    \newcommand{\la}{\lambda}

\renewcommand{\epsilon}{\varepsilon}

\newcommand{\diag}{\operatorname{diag}} \newcommand{\trace}{\operatorname{tr}} 
  
  \newcommand{\cof}{\operatorname{cof}}
\newcommand{\adj}{\operatorname{Adj}}

 \newcommand{\sing}{\mathrm{Sing}} \newcommand{\vol}{\operatorname{vol}}

\newcommand{\cC}{\EuScript{C}}    
 \newcommand{\J}{\EuScript{J}}


\begin{document}

\title[ Adapted metrics for codimension one singular hyperbolic flows] { Adapted metrics for codimension one singular hyperbolic flows}


\thanks{L.S. is partially supported by Fapesb-JCB0053/2013, PRODOC-UFBA/2014 and CNPq 2017 postdoctoral schoolarship at Universidade Federal do Rio de Janeiro, V.C. is supported by CAPES}

\subjclass{Primary: 37D30; Secondary: 37D25.} \renewcommand{\subjclassname}{\textup{2000} Mathematics5
  Subject Classification}
\keywords{dominated splitting,
  partial hyperbolicity, sectional hyperbolicity,
  infinitesimal Lyapunov function.}


\author{Luciana Salgado} \address[L.S. \& V.C.]{Universidade Federal da Bahia, Instituto de Matem\'atica\\ Av. Adhemar de Barros, S/N , Ondina,
40170-110 - Salvador-BA-Brazil} \email{lsalgado@ufba.br \& vinicius.coelho@ufba.br}

\author{Vinicius Coelho}

\begin{abstract}
For a partially hyperbolic splitting $T_\Gamma M=E\oplus F$ of
$\Gamma$, a $C^{1}$ vector field $X$ on a $m$-manifold, we obtain singular-hyperbolicity using only the tangent map $DX$ of $X$ and its derivative $DX_{t}$ whether $E$ is one-dimensional subspace. We show the existence of adapted metrics for singular hyperbolic set $\Gamma$ for $C^{1}$  vector fields if $\Gamma$ has a partially hyperbolic splitting $T_\Gamma M=E\oplus F$ where $F$ is volume expanding, $E$ is
uniformly contracted and a one-dimensional subspace.
\end{abstract}


\date{\today}

\maketitle \tableofcontents

\section{Introduction}

Let $M$ be a connected compact finite $m$-dimensional manifold, $m \geq 3$, with or without boundary. We consider a vector field $X$, such that $X$ is inwardly transverse to the boundary $\partial M$, if $\partial M\neq\emptyset$. The flow generated by $X$ is denoted by $X_t$.

A hyperbolic set for a flow $X_t$ on a finite dimensional Riemannian manifold $M$ is a compact invariant set $\Gamma$ with a continuous splitting of the
tangent bundle, $T_\Gamma M= E^s\oplus E^X \oplus E^u$, where $E^X$ is the direction of the vector field, for which the subbundles are invariant under
the derivative $DX_t$ of the flow $X_t$
\begin{align}\label{eq:hyp-splitting}
  DX_t\cdot E^*_x=E^*_{X_t(x)},\quad  x\in \Gamma, \quad t\in\RR,\quad *=s,X,u;
\end{align} and $E^s$ is uniformly contracted by $DX_t$ and $E^u$ is likewise expanded: there are $K,\lambda>0$ so that
\begin{align}\label{eq:Klambda-hyp}
  \|DX_t\mid_{E^s_x}\|\le K e^{-\lambda t},
  \quad
  \|(DX_t \mid_{E^u_x})^{-1}\|\le K e^{-\lambda t},
  \quad x\in \Gamma, \quad t\in\RR.
\end{align} Very strong properties can be deduced from the existence of such hyperbolic structure; see for
instance~\cite{Bo75,BR75,Sh87,KH95,robinson2004}.

An important feature of hyperbolic structures is that it does not depends on the metric on the ambient manifold (see \cite{HPS77}). We recall that a metric is said to be \emph{adapted} to the hyperbolic structure if we can take $K = 1$ in equation (\ref{eq:Klambda-hyp}).

Weaker notions of hyperbolicity (e.g. dominated splitting, partial hyperbolicity, volume hyperbolicity, sectional hyperbolicity, singular hyperbolicity) have been developed to encompass larger classes of systems beyond the uniformly hyperbolic ones; see~\cite{BDV2004} and
specifically~\cite{viana2000i,AraPac2010,ArbSal} for singular hyperbolicity and Lorenz-like attractors.

In the same work \cite{HPS77}, Hirsch, Pugh and Shub asked about adapted metrics for dominated splittings. The positive answer was given by Gourmelon \cite{Goum07} in 2007, where it is given adapted metrics to dominated splittings for both diffeomorphisms and flows, and he also gives an adapted metric for partially hyperbolic splittings as well.

Proving the existence of some hyperbolic structure is, in general, a non-trivial matter, even in its weaker forms.

In \cite{lewow80}, Lewowicz stated that a diffeomorphism on a compact riemannian manifold is Anosov if and only if its derivative admits a nondegenerate Lyapunov quadratic function.

An example of application of the adapted metric from \cite{Goum07} is contained in \cite{arsal2012a}, where the first author jointly with V. Ara\'ujo, following the spirit of Lewowicz's result, construct quadratic forms which characterize partially hyperbolic and singular hyperbolic structures on a trapping region for flows.

In \cite{arsal2015a}, the first author and V. Ara\'ujo provided an alternative way to obtain singular hyperbolicity for three-dimensional flows using the same expression as in Proposition~\ref{pr:J-separated} applied to the infinitesimal generator of the exterior square $\wedge^2 DX_t$ of the cocycle $DX_t$. This infinitesimal generator can be explicitly calculated through the infinitesimal generator $DX$ of the linear multiplicative cocycle $DX_t$
associated to the vector field $X$.

Here, we provide a similar result as above for $m$-dimensional flows if this admits a partially hyperbolic splitting for which one of the invariant subbundles is one-dimensional.

Moreover, we show the existence of adapted metrics for a singular hyperbolic set $\Gamma$ for $C^{1}$  vector fields if $\Gamma$ has a partially hyperbolic splitting $T_\Gamma M=E\oplus F$, where $F$ is volume expanding, $E$ is uniformly contracted and one-dimensional subbundle.

The paper is organized as follow. In Section \ref{sec:stat} we provide definitions and statement of results. In Section \ref{sec:statem-prelim-result} we provide some auxiliary results. Finally, in Section \ref{sec:proof} are given the proofs of our theorems.

\section{Preliminary definitions and statement of results} \label{sec:stat}
\hfill

We now present preliminary definitions and results.

We recall that a \emph{trapping region} $U$ for a flow $X_t$ is an open subset of the manifold $M$ which satisfies: $X_t(U)$ is contained in $U$ for all
$t>0$, and there exists $T>0$ such that $\overline{X_t(U)} $ is contained in the interior of $U$ for all $t>T$. We define $\Gamma(U)=\Gamma_X(U):=
\cap_{t>0}\overline {X_t(U)}$ to be the \emph{maximal
  positive invariant subset in the trapping region $U$}.

A \emph{singularity} for the vector field $X$ is a point $\sigma\in M$ such that $X(\sigma)=\vec0$ or, equivalently, $X_t(\sigma)=\sigma$ for all $t \in
\RR$. The set formed by singularities is the \emph{singular set of
  $X$} denoted $\sing(X)$.  We say that a singularity is
  hyperbolic if the eigenvalues of the derivative
$DX(\sigma)$ of the vector field at the singularity $\sigma$ have nonzero real part.

\begin{definition}\label{eq:domination}
  A \emph{dominated splitting} over a compact invariant set $\Lambda$ of $X$
  is a continuous $DX_t$-invariant splitting $T_{\Lambda}M =
  E \oplus F$ with $E_x \neq \{0\}$, $F_x \neq \{0\}$ for
  every $x \in \Lambda$ and such that there are positive
  constants $K, \lambda$ satisfying
  \begin{align}\label{eq:def-dom-split}
    \|DX_t|_{E_x}\|\cdot\|DX_{-t}|_{F_{X_t(x)}}\|<Ke^{-\la
      t}, \ \textrm{for all} \ x \in \Lambda, \ \textrm{and
      all} \,\,t> 0.
  \end{align}
\end{definition}

A compact invariant set $\Lambda$ is said to be \emph{partially hyperbolic} if it exhibits a dominated splitting $T_{\Lambda}M = E \oplus F$ such that
subbundle $E$ is \emph{uniformly contracted}, i.e., there exists $C>0$ and $\lambda>0$ such that $\|DX_t|_{E_x}\|\leq Ce^{-\lambda t}$ for $t\geq 0$.
In this case $F$ is the \emph{central subbundle} of $\Lambda$.  Or else, we may replace uniform contraction along $E^s$ by uniform expansion along $F$
(the right hand side condition in (\ref{eq:Klambda-hyp}).

We say that a $DX_t$-invariant subbundle $F \subset
  T_{\Lambda}M$ is a \emph{sectionally expanding} subbundle
  if $\dim F_x \geq 2$ is constant for $x\in\Lambda$
  and there are positive constants $C , \lambda$ such that for every $x
    \in \Lambda$ and every two-dimensional linear subspace
    $L_x \subset F_x$ one has
    \begin{align}\label{eq:def-sec-exp}
      \vert \det (DX_t \vert_{L_x})\vert > C e^{\la t},
      \textrm{ for all } t>0.
    \end{align}

\begin{definition}\label{sechypset} \cite[Definition
  2.7]{MeMor06} A \emph{sectional-hyperbolic set} is a
  partially hyperbolic set whose
  central subbundle is sectionally expanding.
\end{definition}

This is a particular case of the so called \emph{singular hyperbolicity} whose definition we recall now.  A $DX_t$-invariant subbundle $F \subset
T_{\Lambda}M$ is said to be a \emph{volume expanding} if in the above condition \ref{eq:def-sec-exp}, we may write
 \begin{align}\label{eq:def-vol-exp}
      \vert \det (DX_t \vert_{F_x})\vert > C e^{\la t},
      \textrm{ for all } t>0.
    \end{align}

\begin{definition}\label{singhypset} \cite[Definition
  1]{MPP99} A \emph{singular hyperbolic set} is a
  partially hyperbolic set whose 
  central subbundle is volume expanding.
\end{definition}

Clearly, in the three-dimensional case, these notions are equivalent.

This is a feature of the Lorenz attractor as proved in \cite{Tu99} and also a notion that extends hyperbolicity for singular flows, because sectional
hyperbolic sets without singularities are hyperbolic; see \cite{MPP04, AraPac2010}.


We assume that coordinates are chosen locally adapted to $\J$ in such a way that $\J(v)=\langle J_x(v),v\rangle, v\in T_xM, x\in U$, and $J_x:T_xM\circlearrowright$ is a self-adjoint linear operator having diagonal matrix with $\pm1$ entries along the diagonal.

We say that a $C^1$ family $\J$ of indefinite and non-degenerate quadratic forms is \emph{compatible} with a continuous splitting $E_\Gamma\oplus
F_\Gamma=E_\Gamma$ of a vector bundle over some compact subset $\Gamma$ if $E_x$ is a $\J$-negative subspace and $F_x$ is a $\J$-positive subspace for
all $x\in\Gamma$.

\begin{proposition}\cite[Proposition 1.3]{arsal2012a}
  \label{pr:J-separated}
  A $\J$-non-negative vector field $X$ on $U$ is strictly
  $\J$-separated if, and only if, there exists a compatible
  family $\J_0$ of forms and there exists a function
  $\delta:U\to\RR$ such that the operator $\tilde J_{0,x}:=
  J_0\cdot DX(x)+DX(x)^*\cdot J_0$ satisfies
  \begin{align*}
    \tilde J_{0,x}-\delta(x)J_0 \quad\text{is positive
      definite}, \quad x\in U,
  \end{align*}
  where $DX(x)^*$ is the adjoint of $DX(x)$ with respect to
  the adapted inner product.
\end{proposition}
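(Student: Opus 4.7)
The plan is to establish the equivalence by infinitesimal analysis, since both conditions ultimately describe the derivative of $t \mapsto \J(DX_t(x)v)$ at $t=0$, at different levels of regularity. I would first work in the adapted coordinates mentioned just above the statement, in which $J_x$ is a locally constant diagonal matrix with $\pm 1$ entries. There the Lie derivative of $x \mapsto J_x$ vanishes and one gets the clean identity
\begin{equation*}
\frac{d}{dt}\Big|_{t=0} \J(DX_t(x)v) = \bigl\langle (J_x DX(x) + DX(x)^* J_x) v, v\bigr\rangle = \langle \tilde J_{0,x} v, v\rangle,
\end{equation*}
which will serve as the bridge between the geometric (cone invariance) and algebraic (positive definiteness) formulations.

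For the easier $(\Leftarrow)$ direction, assume a compatible $\J_0$ and a function $\delta$ are given. I would introduce the auxiliary function
\begin{equation*}
\phi_v(t) := \J_0(DX_t(x)v)\,\exp\!\Bigl(-\int_0^t \delta(X_s(x))\,ds\Bigr),
\end{equation*}
differentiate, and invoke the hypothesis at each point $X_t(x)$ to obtain $\phi_v'(t) > 0$ for $v\neq 0$. Hence $\J_0 \circ DX_t(x)$ grows strictly, modulo the exponential integrating factor, along every non-trivial orbit of $DX_t$. This forces the cone $\{\J_0 \ge 0\}$ to be mapped strictly into its interior, and because $\J_0$ is compatible with the splitting, this translates into strict $\J$-separation of $X$.

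For the $(\Rightarrow)$ direction I would run the argument in reverse, and here the main obstacle lies. Strict cone invariance only gives that $t \mapsto \J(DX_t v)$ strictly increases past level zero for small $t > 0$; extracting the pointwise algebraic bound $\tilde J_{0,x} - \delta(x)J_0 > 0$ requires both a possibly refined compatible form $\J_0$ and a fibrewise compactness argument. My strategy is to apply Finsler's lemma on pencils of quadratic forms at each $x$: on the compact unit sphere in $T_xM$, the condition ``$\langle \tilde J_x v, v\rangle > 0$ whenever $\J(v) \ge 0$ and $v \ne 0$'' (which follows from strict separation combined with $\J$-non-negativity) is equivalent to the existence of a scalar $\delta(x)$ making $\tilde J_x - \delta(x) J$ positive definite.

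The delicate technical points that I expect to occupy most of the work are the continuous (or at least measurable) dependence of $\delta$ on $x$ throughout $U$, and the construction of $\J_0$, for example by a suitable convex combination or a partition-of-unity smoothing of $\J$, that retains compatibility with the splitting $E \oplus F$ globally. Once these two items are in place, the forward direction follows by contradiction: a failure of positive definiteness at some $x$ would produce a sequence of unit vectors along which $\phi_v(t)$ does not increase strictly, contradicting strict $\J$-separation.
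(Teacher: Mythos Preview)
First, note that the paper does \emph{not} supply its own proof of this proposition: it is quoted verbatim from \cite[Proposition~1.3]{arsal2012a} and used as a black box (see also Proposition~\ref{pr:J-separated-tildeJ}, items~(2)--(3), likewise cited). So there is no in-paper argument to compare against; I can only assess the internal soundness of your plan.

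Your $(\Leftarrow)$ direction is fine: the integrating-factor computation with $\phi_v$ is exactly the right mechanism, and it yields strict $\J_0$-separation; compatibility of $\J_0$ with the splitting then transfers this to strict $\J$-separation.

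The $(\Rightarrow)$ direction, however, has a genuine gap at the step where you invoke Finsler's lemma. You assert that strict separation together with $\J$-non-negativity of $X$ gives
\[
\langle \tilde J_x v, v\rangle > 0 \quad\text{for all } v\neq 0 \text{ with } \J(v)\ge 0,
\]
but this does not follow. Strict separation says only that $\J(DX_t v)>0$ for every $t>0$ whenever $\J(v)\ge0$; at a vector on the zero cone $\J(v)=0$ this yields merely $\tilde\J(v)=\partial_t\J(DX_t v)\big|_{t=0}\ge 0$, and for $\J(v)>0$ it gives no sign information on $\tilde\J(v)$ at all. The hypothesis $\J(X(x))\ge0$ concerns only the flow direction and does not upgrade these inequalities. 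Consequently Finsler's lemma is not applicable to the original pair $(\tilde J_x, J_x)$, and the whole content of the forward implication lies precisely in \emph{constructing} the modified family $\J_0$ for which the strict infinitesimal inequality holds --- the step you flag as ``delicate'' but then bypass. A workable route (and the one behind the cited references) is to produce $\J_0$ from the dominated splitting furnished by strict $\J$-separation, e.g.\ via an adapted metric or a time-averaged form $\J_0(v)=\int_0^T \J(DX_s v)\,ds$, and only then read off $\delta$.
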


\begin{remark}\label{rmk:derivativeJ}
  The expression for $\tilde J_{0,x}$ in terms of $J_0$ and
  the infinitesimal generator of $DX_t$ is, in fact, the time
  derivative of $\J_0$ along the flow direction at the point
  $x$, which we denote $\partial_t J_0$; see item 1 of
  Proposition~\ref{pr:J-separated-tildeJ}. We keep this
  notation in what follows.
\end{remark}


Let $A:G\times\RR\to G$ be a smooth map given by a collection of linear bijections
\begin{align*}
  A_t(x): G_x\to G_{X_t(x)}, \quad x\in\Gamma, t\in\RR,
\end{align*}
where $\Gamma$ is the base space of the finite dimensional vector bundle $G$, satisfying the cocycle property \begin{align*}
  A_0(x)=Id, \quad A_{t+s}(x)=A_t(X_s(x))\circ A_s(x), \quad
  x\in\Gamma, t,s\in\RR,
\end{align*} with $\{X_t\}_{t\in\RR}$ a complete smooth flow over $M\supset\Gamma$.  We note that for each fixed $t>0$ the map $A_t: G\to G, v_x\in G_x \mapsto A_t(x)\cdot v_x\in G_{X_t(x)}$ is an automorphism of the vector bundle $G$.

The natural example of a linear multiplicative cocycle over a smooth flow $X_t$ on a manifold is the derivative cocycle $A_t(x)=DX_t(x)$ on the tangent bundle $G=TM$ of a finite dimensional compact manifold $M$. Another example is given by the exterior power $A_t(x)=\wedge^kDX_t$ of $DX_t$ acting on $G=\wedge^k TM$, the family of all $k$-vectors on the tangent spaces of $M$, for some fixed $1\le k\le\dim G$.

It is well-known that the exterior power of a inner product space has a naturally induced inner product and thus a norm. Thus $G=\wedge^k TM$ has an induced norm from the Riemannian metric of $M$. For more details see e.g. \cite{arnold-l-1998}.

In what follows we assume that the vector bundle $G$ has a smoothly defined inner product in each fiber $G_x$ which induces a corresponding norm $\|\cdot\|_x, x\in\Gamma$.

\begin{definition}\label{def:domcocycle}
  A continuous splitting $G=E\oplus F$ of the vector bundle
  $G$  into a pair of subbundles is \emph{dominated} (with
  respect to the automorphism $A$ over $\Gamma$) if
  \begin{itemize}
  \item the splitting is \emph{invariant}: $A_t(x)\cdot
    E_x=E_{X_t(x)}$ and $A_t(x)\cdot F_x=F_{X_t(x)}$ for all
      $x\in \Gamma$ and $t\in\RR$; and
    \item there are positive
  constants $K, \lambda$ satisfying
  \begin{align}\label{eq:def-dom-split-cocycle}
    \|A_t|_{E_x}\|\cdot\|A_{-t}|_{F_{X_t(x)}}\|<Ke^{-\la
      t}, \ \textrm{for all} \ x \in \Gamma, \ \textrm{and
      all} \,\,t> 0.
  \end{align}
  \end{itemize}
\end{definition}

We say that the splitting $G=E\oplus F$ is \emph{partially
  hyperbolic} if it is dominated and the subbundle $E$ is
uniformly contracted: $\|A_t\mid E_x\|\le Ce^{-\mu t}$ for all $t>0$ and suitable constants $C,\mu>0$.


Let $E_U$ be a finite dimensional vector bundle with inner product $\langle\cdot,\cdot\rangle$ and base given by the trapping region $U\subset M$. Let
$\J:E_U\to\RR$ be a continuous family of quadratic forms $\J_x:E_x\to\RR$ which are non-degenerate and have index $0<q<\dim(E)=n$.  The index $q$ of
$\J$ means that the maximal dimension of subspaces of non-positive vectors is $q$. Using the inner product, we can represent $\J$ by a family of
self-adjoint operators $J_x:E_x\circlearrowleft$ as $\J_x(v)=\langle J_x(v),v\rangle, v\in E_x, x\in U$.

We also assume that $(\J_x)_{x\in U}$ is continuously differentiable along the flow.  The continuity assumption on $\J$ means that for every continuous
section $Z$ of $E_U$ the map $U\ni x\mapsto \J(Z(x))\in\RR$ is continuous. The $C^1$ assumption on $\J$ along the flow means that the map $\RR\ni
t\mapsto \J_{X_t(x)} (Z(X_t(x)))\in \RR$ is continuously differentiable for all $x\in U$ and each $C^1$ section $Z$ of $E_U$.

Using Lagrange diagonalization of a quadratic form, it is easy to see that the choice of basis to diagonalize $\J_y$ depends smoothly on $y$ if the
family $(\J_x)_{x\in U}$ is smooth, for all $y$ close enough to a given $x$. Therefore, choosing a basis for $T_x$ adapted to $\J_x$ at each $x\in U$,
we can assume that locally our forms are given by $\langle J_x(v),v\rangle$ with $J_x$ a diagonal matrix whose entries belong to $\{\pm1\}$,
$J_x^*=J_x$, $J_x^2=I$ and the basis vectors depend as smooth on $x$ as the family of forms $(\J_x)_x$.

We let $\cC_\pm=\{C_\pm(x)\}_{x\in U}$ be the family of positive and negative cones associated to $\J$ \begin{align*}
  C_\pm(x):=\{0\}\cup\{v\in E_x: \pm\J_x(v)>0\},  \quad x\in U,
\end{align*}
and also let $\cC_0=\{C_0(x)\}_{x\in U}$ be the corresponding family of zero vectors $C_0(x)=\J_x^{-1}(\{0\})$ for all $x\in U$.


Let $A:E\times\RR\to E$ be a linear multiplicative cocycle on the vector bundle $E$ over the flow $X_t$. The following definitions are fundamental to state our results.

\begin{definition} \label{def:J-separated} Given a continuous field of non-degenerate quadratic forms $\J$ with constant index on the positively
invariant open subset $U$ for the flow $X_t$, we say that the cocycle $A_t(x)$ over $X_t$ is
\begin{itemize} \item $\J$-\emph{separated} if $A_t(x)(C_+(x))\subset
  C_+(X_t(x))$, for all $t>0$ and $x\in U$ (simple cone invariance);
\item \emph{strictly $\J$-separated} if $A_t(x)(C_+(x)\cup
  C_0(x))\subset C_+(X_t(x))$, for all $t>0$ and $x\in U$
  (strict cone invariance).
  \item $\J$-\emph{monotone} if $\J_{X_t(x)}(DX_t(x)v)\ge \J_x(v)$, for each $v\in
  T_xM\setminus\{0\}$ and $t>0$;
\item \emph{strictly $\J$-monotone} if $\partial_t\big(\J_{X_t(x)}(DX_t(x)v)\big)\mid_{t=0}>0$,
  for all $v\in T_xM\setminus\{0\}$, $t>0$ and $x\in U$;
\item $\J$-\emph{isometry} if $\J_{X_t(x)}(DX_t(x)v) = \J_x(v)$, for each $v\in T_xM$ and $x\in U$.
\end{itemize}
We say that the flow $X_t$ is (strictly) $\J$-\emph{separated} on $U$ if $DX_t(x)$ is (strictly) $\J$-\emph{separated} on $T_UM$. Analogously, the flow of $X$ on $U$ is (strictly) $\J$-\emph{monotone} if  $DX_t(x)$ is (strictly) $\J$-\emph{monotone}.

\end{definition}

\begin{remark}\label{rmk:J-separated-C-}
  If a flow is strictly $\J$-separated, then for $v\in T_xM$
  such that $\J_x(v)\le0$ we have
  $\J_{X_{-t}(x)}(DX_{-t}(v))<0$, for all $t>0$, and $x$ such
  that $X_{-s}(x)\in U$ for every $s\in[-t,0]$. Indeed,
  otherwise $\J_{X_{-t}(x)}(DX_{-t}(v))\ge0$ would imply
  $\J_x(v)=\J_x\big(DX_t(DX_{-t}(v))\big)>0$, contradicting
  the assumption that $v$ was a non-positive vector.

  This means that a flow $X_t$ is strictly
    $\J$-separated if, and only if, its time reversal
    $X_{-t}$ is strictly $(-\J)$-separated. 
\end{remark}

A vector field $X$ is $\J$-\emph{non-negative} on $U$ if $\J(X(x))\ge0$ for all $x\in U$, and $\J$-\emph{non-positive} on $U$ if $\J(X(x))\leq 0$ for
all $x\in U$. When the quadratic form used in the context is clear, we will simply say that $X$ is non-negative or non-positive.

A characterization of dominated splittings, via quadratic forms is given in \cite{arsal2012a} (see also \cite{Wojtk01}) as follow.

\begin{theorem} \label{theo2012} \cite[Theorem  2.13]{arsal2012a}
The cocycle $A_{t}(x)$ is strictly $\J$-separated if, and only if, $E_{U}$ admits a dominated splitting $F_{-} \oplus F_{+}$ with respect to $A_{t}(x)$ on the maximal invariant subset $\Lambda$ of $U$, with constant dimensions $\dim F_{-} = q, \dim F_{+} = p, \dim M = p+q$.
\end{theorem}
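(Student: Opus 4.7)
The plan is to prove the two implications separately, extracting the invariant splitting from the cone condition on one side, and manufacturing an adapted quadratic form from the dominated splitting on the other.

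For the ($\Rightarrow$) direction, assume $A_t(x)$ is strictly $\J$-separated on $U$. I would define the candidate subbundles by nested intersection of iterated closed cones,
\begin{equation*}
F_+(x) = \bigcap_{t \geq 0} A_t(X_{-t}(x))\bigl(\overline{C_+(X_{-t}(x))}\bigr), \qquad F_-(x) = \bigcap_{t \geq 0} A_{-t}(X_t(x))\bigl(\overline{C_-(X_t(x))}\bigr),
\end{equation*}
the first requiring that the backward orbit of $x$ stays in $U$, so these are naturally defined on the maximal invariant set $\Lambda$. First, strict invariance forces the projective opening of each iterated cone to shrink exponentially, so the nested sequence converges projectively to a subspace (a standard cone-field argument). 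Next, since $\J$ has constant index $q$, the maximal $\J$-positive (resp.\ $\J$-negative) subspace has dimension $p$ (resp.\ $q$), which caps $\dim F_+$ (resp.\ $\dim F_-$) from above; the transversality $F_+(x) \cap F_-(x) = \{0\}$ (any common vector would lie in $C_0(x)$ but in $C_+(X_t(x))$ for $t>0$, contradicting strict invariance) together with upper-semicontinuity forces exact constant dimensions. Finally, the domination estimate \eqref{eq:def-dom-split-cocycle} follows from quantifying strict separation: by Proposition~\ref{pr:J-separated} applied to a compatible family $\J_0$ there exists a function $\delta$ on $U$ such that $\partial_t \J_0 - \delta \J_0$ is positive definite on the compact set $\Lambda$, and integrating this differential inequality along orbits gives a uniform lower bound on the $\J_0$-growth of $A_t v$ for $v \in F_+$ together with a uniform upper bound for $v \in F_-$, producing the required constants $K,\lambda > 0$.

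For the ($\Leftarrow$) direction, given a dominated splitting $F_- \oplus F_+$ on $\Lambda$ with $\dim F_- = q$ and $\dim F_+ = p$, I would define the adapted quadratic form
\begin{equation*}
\J_0(v) = \|v_+\|^2 - \|v_-\|^2, \qquad v = v_- + v_+, \quad v_\pm \in F_\pm(x),
\end{equation*}
and observe that strict $\J_0$-separation is an immediate consequence of the defining domination inequality: any vector with nontrivial $F_+$-component sees its $F_+$-part exponentially dominate its $F_-$-part under $A_t$, so it becomes strictly $\J_0$-positive after a uniform time $T>0$. A standard time-averaging of $\J_0$ over $[0,T]$ then upgrades this to strict separation for every $t>0$, and the equality of indices between $\J_0$ and $\J$ lets one identify the two cone fields up to a compatible change of frame.

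The main obstacle lies in the ($\Rightarrow$) dimension count: showing that the nested intersections produce subspaces of \emph{exactly} dimensions $p$ and $q$ with continuous dependence on $x$. Upper-semicontinuity is automatic from the cone construction, so the real work is lower-semicontinuity and constancy on $\Lambda$; this requires using strict cone invariance quantitatively (rather than qualitatively) to control the rate at which the iterated cone collapses onto its limit subspace. The hypothesis that $\J$ has constant index $q$ is precisely what forces the resulting dominated splitting to carry the stated dimensions.
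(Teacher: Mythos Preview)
The paper does not contain a proof of this statement: Theorem~\ref{theo2012} is quoted verbatim from \cite[Theorem~2.13]{arsal2012a} and used as a black box, so there is nothing here to compare your proposal against directly.

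That said, your outline follows the standard cone-field argument that underlies the cited result (and the related work of Wojtkowski~\cite{Wojtk01} and Katok~\cite{BurnKatok94} referenced in the paper). The ($\Rightarrow$) direction via nested intersections of iterated cones is the expected construction; on the compact invariant set $\Lambda$, strict $\J$-separation gives a uniform gap (via Proposition~\ref{pr:J-separated-tildeJ}) that makes the cone-contraction quantitative, and your remark about the index forcing the dimension count is the right mechanism. For ($\Leftarrow$), your form $\J_0(v)=\|v_+\|^2-\|v_-\|^2$ is the natural candidate, but note a regularity issue you gloss over: the dominated splitting $F_-\oplus F_+$ is in general only continuous, so $\J_0$ as written need not be $C^1$ along the flow as the paper's framework requires. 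The time-averaging you mention is indeed the standard remedy, but it is doing real work here (smoothing in the flow direction), not merely upgrading eventual to immediate strict separation; you should make that explicit. With that caveat, the plan is sound.
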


This is an algebraic/geometrical way to prove the existence of dominated splittings. In fact, we have an analogous result about partial hyperbolic splittings, as follow.

We say that a compact invariant subset $\Lambda$ is \emph{non-trivial} if
\begin{itemize}
\item either $\Lambda$ does not contain singularities;
\item or $\Lambda$ contains at most finitely many singularities, $\Lambda$ contains some regular orbit and is connected.
    \end{itemize}

\begin{theorem}\cite[Theorem A]{arsal2012a}
  \label{mthm:Jseparated-parthyp}
  A non-trivial compact invariant subset $\Gamma$ is a
  partially hyperbolic set for a flow $X_t$ if, and only if,
  there is a $C^1$ field $\J$ of non-degenerate and
  indefinite quadratic forms with constant index, equal to
  the dimension of the stable subspace of $\Gamma$, such
  that $X_t$ is a non-negative strictly $\J$-separated flow
  on a neighborhood $U$ of $\Gamma$.

  Moreover $E$ is a negative subspace, $F$ a positive
  subspace and the splitting can be made almost orthogonal.
\end{theorem}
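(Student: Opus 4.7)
The proof splits naturally into two implications, both routed through Theorem~\ref{theo2012}, which already identifies strict $\J$-separation with the existence of a dominated splitting.

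For the ``only if'' direction, suppose $\Gamma$ is partially hyperbolic with splitting $T_\Gamma M = E\oplus F$ and $\dim E = q$. The plan is to build $\J$ explicitly. First I would invoke Gourmelon's adapted metric for partially hyperbolic splittings to assume that the contraction on $E$ and the domination between $E$ and $F$ hold with constant $K=1$. Then I would orthogonalize $E$ against $F$ in the adapted metric on $\Gamma$ (producing the ``almost orthogonal'' refinement) and extend continuously to a small tubular neighborhood $U$ of $\Gamma$ as a direct sum $\widetilde E \oplus \widetilde F = TM|_U$, with angles bounded uniformly away from zero. Setting
\begin{align*}
\J_x(v) = -\|\pi_{\widetilde E}(v)\|^2 + \|\pi_{\widetilde F}(v)\|^2,
\end{align*}
the subbundle $\widetilde E$ is $\J$-negative and $\widetilde F$ is $\J$-positive by construction. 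Strict $\J$-separation follows from the adapted domination estimate: any $v=v_E+v_F$ with $\J_x(v)\ge 0$ satisfies $\|v_F\|\ge\|v_E\|$, so after time $t>0$ the $F$-component is at worst weakly contracted while the $E$-component is strictly exponentially contracted, yielding $\J_{X_t(x)}(DX_t v)>0$. Non-negativity of $X$ holds because for a partially hyperbolic set of a flow the vector field direction lies in the central bundle $F$, hence is $\J$-positive on regular points and trivially zero at singularities.

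For the ``if'' direction, assume $X_t$ is non-negative and strictly $\J$-separated on $U$. Apply Theorem~\ref{theo2012} to the derivative cocycle $DX_t$ to obtain a dominated splitting $T_\Gamma M = F_-\oplus F_+$ on the maximal invariant subset $\Gamma$, with $\dim F_-=q$ and $F_-$ the $\J$-negative subbundle, $F_+$ the $\J$-positive one. The remaining task is to upgrade domination to uniform contraction of $F_-$. The key leverage is the hypothesis $\J(X)\ge 0$, which places the flow direction inside $F_+$; hence $F_-$ is strictly transverse to $X$, and domination between $F_-$ and $F_+$ can be converted into absolute exponential decay of $\|DX_t|_{F_-}\|$. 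Concretely, Proposition~\ref{pr:J-separated} supplies a function $\delta$ with $\partial_t J_0-\delta J_0$ positive definite; integrating this inequality along orbits of vectors in the $\J$-negative cone gives the uniform contraction rate on $F_-$.

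The main obstacle is precisely this last upgrade from domination plus $\J$-non-negativity of $X$ to uniform contraction on $F_-$. The non-triviality hypothesis on $\Gamma$ (either singularity-free, or finitely many singularities with some regular orbit in a connected set) is essential here to prevent the flow direction from degenerating on a positive-measure portion of $\Gamma$ and spoiling the transversality-based argument. Care is also needed at singularities $\sigma\in\Gamma$ where $X(\sigma)=0$: there, the contraction on $F_-$ must be extracted directly from the infinitesimal inequality $\partial_t J_0-\delta J_0>0$ at $\sigma$, rather than from transversality to the (vanishing) flow direction.
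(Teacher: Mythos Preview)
This theorem is not proved in the present paper: it is quoted verbatim as \cite[Theorem~A]{arsal2012a} and used as a black box, so there is no ``paper's own proof'' to compare against. Your sketch is therefore an attempt to reconstruct the argument of the cited reference.

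On the substance of your reconstruction: the ``only if'' direction is essentially the standard construction and is fine at this level of detail. The ``if'' direction, however, has a genuine gap in the step you label ``concretely''. From Proposition~\ref{pr:J-separated} you get $\partial_t\J_0-\delta\J_0>0$, and for $v\in F_-$ (where $\J_0<0$) this integrates to $|\J_0(DX_t v)|\le e^{\Delta_0^t(x)}|\J_0(v)|$. That yields contraction on $F_-$ only if $\Delta_0^t(x)\to-\infty$, which is \emph{not} part of the hypotheses (compare Proposition~\ref{pr:char-dom-split}(2), where this is an extra assumption). Strict $\J$-separation alone gives domination, not absolute contraction.

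The mechanism that actually upgrades domination to contraction is the one you mention earlier but then abandon: $\J(X)\ge0$ forces $X(x)\in F_+$ on $\Gamma$ (since $DX_tX(x)=X(X_t(x))$ stays in the non-negative cone for all times and $F_+$ is the maximal invariant subspace with that property). Then for regular $x$ and unit $w\in F_-(x)$, domination against the specific vector $X(x)\in F_+$ gives
\[
|DX_t w|\le C e^{-\lambda t}\,\frac{|X(X_t(x))|}{|X(x)|}\le \frac{C\sup_\Gamma|X|}{|X(x)|}\,e^{-\lambda t},
\]
which is uniform away from singularities. The non-triviality hypothesis then lets you pass to singularities by approximation along regular orbits (exactly the limiting argument used in the proof of Theorem~\ref{mthm1} in this paper). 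Your closing paragraph identifies this obstacle correctly; the issue is that the ``concrete'' argument you offer immediately before it does not resolve it.
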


Here strict $\J$-separation corresponds to strict cone invariance under the action of $DX_t$ and $\langle,\rangle$ is a Riemannian inner product in the ambient manifold. We recall that the index of a field quadratic forms $\J$ on a set $\Gamma$ is the dimension of the $\J$-negative space at every tangent space $T_xM$ for $x\in U$.  Moreover, we say that the splitting $T_\Gamma M=E\oplus F$ is \emph{almost orthogonal} if, given $\epsilon>0$, there exists a smooth inner product $\langle , \rangle$ on $T_\Gamma M$ so that $|\langle u, v\rangle| < \epsilon$, for all $u \in E, v \in F$, with $\| u \| = 1 = \| v\|$.

We note that the condition stated in Theorem~\ref{mthm:Jseparated-parthyp} allows us to obtain partial hyperbolicity checking a condition at every point of the compact invariant set that depends only on the tangent map $DX$ to the vector field $X$ together with a family $\J$ of quadratic forms without using the flow $X_t$ or its derivative $DX_t$. This is akin to checking the stability of singularity of a vector field using a Lyapunov function.

\subsection{Exterior powers}\label{sec:ext-pow}
\hfill

We note that if $E\oplus F$ is a $DX_t$-invariant splitting of $T_\Gamma M$, with $\{e_1,\dots,e_\ell\}$ a family of basis for $E$ and $\{f_1,\dots,f_h\}$ a family of basis for $F$, then $\widetilde F=\wedge^kF$ generated by $\{f_{i_1}\wedge\dots\wedge f_{i_k}\}_{1\le i_1<\dots<i_k\le h}$ is naturally $\wedge^k DX_t$-invariant by construction. In addition, $\tilde E$ generated by $\{e_{i_1}\wedge\dots\wedge e_{i_k}\}_{1\le i_1<\dots<i_k\le \ell}$ together with all the exterior products of $i$ basis elements of $E$ with $j$ basis elements of $F$, where $i+j=k$ and $i,j\ge1$, is also $\wedge^kDX_t$-invariant and, moreover, $\widetilde E\oplus \widetilde F$ gives a splitting of the $k$th exterior power $\wedge^k T_\Gamma M$ of the subbundle $T_\Gamma M$. Let $T_\Gamma M=E_\Gamma\oplus F_\Gamma$ be a $DX_t$-invariant splitting over the compact $X_t$-invariant subset $\Gamma$ such that $\dim F=k\ge2$.  
  Let $\widetilde F=\wedge^k F$ be the $\wedge^k DX_t$-invariant subspace generated by the vectors of $F$ and $\tilde E$ be the $\wedge^k DX_t$-invariant subspace such that $\widetilde E\oplus\widetilde F$ is a splitting of the $k$th exterior power $\wedge^k T_\Gamma M$ of the subbundle $T_\Gamma M$.

We consider the action of the cocycle $DX_t(x)$ on $k$-vector  that is the $k$-exterior $\wedge^kDX_t$ of the cocycle acting on $\wedge^k T_\Gamma M$.

We denote by $\|\cdot\|$ the standard norm on $k$-vectors induced by the Riemannian norm of $M$, see \cite{arnold-l-1998}.

\begin{remark}
Let $V$ vectorial space of dimension $N$.

\begin{itemize}
\item [($i$)] The dimension of space $\wedge^{r} V$ is $\dim \wedge^{r} V = \begin{pmatrix} N  \\ r
    \end{pmatrix}$.  If $\{e_{1},...,e_{N}\}$ is a basis of $V$, so the set $\{e_{k_{1}}\wedge ... \wedge
    e_{k_{r}}: 1 \leq k_{1} < ... < k_{r} \leq N\}$ is a basis in $\wedge^{r} V$ with $\begin{pmatrix} N  \\
    r  \end{pmatrix}$ elements.
\item [($ii$)] If $V$ has the inner product $\langle, \rangle$, then the bilinear extension of
    $$
    \langle u_{1} \wedge ... \wedge u_{r} , v_{1} \wedge
    ... \wedge v_{r} \rangle :=\det(\langle u_{i},v_{j} \rangle)_{r \times r}
    $$
    defines a inner product in $\wedge^{r}V$. In particular, $||u_{1}
    \wedge ... \wedge u_{r}|| = \sqrt{\det(\langle u_{i},u_{j} \rangle)_{r \times r}}$ is the volume of $r$-dimensional parallelepiped $H$ spanned by $u_{1},...,u_{r}$, we write $\vol(u_{1},...,u_{r}) = \vol(H) = \det(H)= |\det (u_{1},...,u_{r})|$.
    \item [($iii$)]~~ If $A: V \to V$ is a linear operator then the linear extension of
    $\wedge^{r} A(u_{1},...,u_{r}) = A(u_{1}) \wedge ... \wedge A(u_{r})$ defines a linear operator
    $\wedge^{r}A$ on $\wedge^{r}V$.

\item [($iv$)]~~ Let $A: V \to V$, and $\wedge^{r} A: \wedge^{r}V \to \wedge^{r}V$ linear operators with $G$
    spanned by $v_{1},...,v_{s} \in V$. Define $H := A|_{G}$, then $H$ is spanned by $A(v_{1}),...,A(v_{s})$
    . So $|\det A|_{G}| = \vol (A|_{G}) = \vol (H) = \vol (A(v_{1}),...,A(v_{s})) = ||A(v_{1})\wedge ...
    \wedge A(v_{s})|| = ||\wedge^{s}A(v_{1}\wedge ... \wedge v_{s})||$.

\end{itemize}
\end{remark}

When $DX_{t}(u_{i}) = v_{i}(t) = v_{i}$, where G is spanned by $u_{1},...,u_{r} \in T_{\Gamma}M$, and $H$ is spanned by $v_{1},...,v_{r}$, we have $H= DX_{t}(G) = DX_{t}|_{G}$. Thus,

\begin{align*}
|\det(DX_{t}|_{G})| = \vol(DX_{t}(u_{1}),...,DX_{t}(u_{r})) =\\ ||DX_{t}(u_{1}) \wedge... \wedge DX_{t}(u_{r})|| =  ||\wedge^{r} DX_{t}(u_{1} \wedge... \wedge u_{r})||.
\end{align*}

It is natural to consider the linear multiplicative cocyle $\wedge^{k}DX_t$ over the flow $X_t$ of $X$ on $U$, that is, for any $k$ choice, $u_{1},u_{2},...,u_{k}$
of vectors in $T_x M, x\in U$ and $t\in\RR$ such that $X_t(x)\in U$ we set
\begin{align*}
  (\wedge^{k} DX_t)\cdot(u_{1}\wedge u_{2} \wedge ... \wedge u_{k} )=(DX_t\cdot
  u_{1})\wedge (DX_t\cdot
  u_{2})\wedge ... \wedge (DX_t\cdot
  u_{k})
\end{align*}
see \cite[Chapter 3, Section 2.3]{arnold-l-1998} or \cite{Winitzki12} for more details and standard results on exterior algebra and exterior products of linear operator.

In \cite{arsal2015a}, the authors proved the following relation between a dominated splitting and its exterior power.

\begin{theorem}\label{bivectparthyp2} \cite[Theorem A]{arsal2015a}
The splitting $T_{\Gamma}M = E\oplus F$ is dominated for $DX_t$ if, and only if, $\wedge^k T_{\Gamma}M = \widetilde E\oplus \widetilde F$ is a dominated splitting for $\wedge^k DX_t$.
\end{theorem}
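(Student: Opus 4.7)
My plan is to exploit two facts: first, that $\widetilde F = \wedge^k F$ is one-dimensional (since $k = \dim F$), so $\|\wedge^k DX_t|_{\widetilde F_x}\| = |\det(DX_t|_{F_x})|$ and $\|\wedge^k DX_{-t}|_{\widetilde F_{X_t(x)}}\| = 1/|\det(DX_t|_{F_x})|$; second, that on $\Gamma$ we may fix an inner product in which $E$ and $F$ are almost orthogonal (by the adapted metric discussion preceding Theorem~\ref{mthm:Jseparated-parthyp}), so that Gram determinants of mixed wedge products split into $E$- and $F$-blocks up to controlled cross terms.

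For the forward direction, I would bound $\|\wedge^k DX_t|_{\widetilde E_x}\|$ by writing a unit $k$-vector of $\widetilde E_x$ in the basis $\{e_{i_1}\wedge\dots\wedge e_{i_j}\wedge f_{j_1}\wedge\dots\wedge f_{j_{k-j}}\}$ with $j\ge 1$. The invariance of the splitting and almost orthogonality yield
\[
\|\wedge^k DX_t|_{\widetilde E_x}\|\;\le\;C\,\max_{1\le j\le k}\|\wedge^{j} DX_t|_{E_x}\|\cdot\|\wedge^{k-j} DX_t|_{F_x}\|.
\]
Dividing by $|\det(DX_t|_{F_x})|$ and using the singular-value identity
\[
\|\wedge^{k-j}(DX_t|_{F_x})\|\;=\;|\det(DX_t|_{F_x})|\cdot\|\wedge^{j}(DX_t|_{F_x})^{-1}\|,
\]
each term is bounded by $\|\wedge^{j} DX_t|_{E_x}\|\cdot\|\wedge^{j}(DX_t|_{F_x})^{-1}\|\le\bigl(\|DX_t|_{E_x}\|\cdot\|(DX_t|_{F_x})^{-1}\|\bigr)^{j}\le (K e^{-\lambda t})^{j}$, which is at most $K' e^{-\lambda t}$ by the assumed domination of $E\oplus F$. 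Hence $\widetilde E\oplus\widetilde F$ is dominated.

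For the converse, I would pick, for each $x\in\Gamma$ and $t>0$, a unit $e\in E_x$ with $\|DX_t e\|=\|DX_t|_{E_x}\|$ and a right singular basis $\{v_1,\dots,v_k\}$ of $DX_t|_{F_x}$ with singular values $\sigma_1\ge\dots\ge\sigma_k=m(DX_t|_{F_x})$. The element $w=e\wedge v_1\wedge\dots\wedge v_{k-1}\in\widetilde E_x$ has norm comparable to $1$, and the volume formula (together with the almost-orthogonality $E\perp F$) gives
\[
\|\wedge^k DX_t\cdot w\|\;\asymp\;\|DX_t e\|\cdot\sigma_1\cdots\sigma_{k-1}
\;=\;\|DX_t|_{E_x}\|\cdot\frac{|\det(DX_t|_{F_x})|}{m(DX_t|_{F_x})}.
\]
Invoking domination of $\widetilde E\oplus\widetilde F$ bounds the left-hand side by $Ke^{-\lambda t}|\det(DX_t|_{F_x})|\|w\|$, and cancelling $|\det(DX_t|_{F_x})|$ produces $\|DX_t|_{E_x}\|\cdot\|(DX_t|_{F_x})^{-1}\|\le C e^{-\lambda t}$, which is the domination condition for $E\oplus F$.

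The main technical obstacle is controlling the error terms that arise because the splitting is only \emph{almost} orthogonal: the Gram determinants computing $\|w\|$ and $\|\wedge^k DX_t\cdot w\|$ carry off-diagonal $\langle e,f\rangle$ cross terms that must be absorbed into multiplicative constants uniformly on $\Gamma$. This is achieved by choosing, via Theorem~\ref{mthm:Jseparated-parthyp}, an inner product where $|\langle u,v\rangle|$ is uniformly small for unit $u\in E$, $v\in F$, and then using continuity of the splitting and compactness of $\Gamma$. A secondary point is to verify the identity $\|\wedge^{k-j}A\|=|\det A|\cdot\|\wedge^{j}A^{-1}\|$, which is immediate from the singular value decomposition of $A=DX_t|_{F_x}$.
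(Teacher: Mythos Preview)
The paper does not contain a proof of this statement: Theorem~\ref{bivectparthyp2} is quoted verbatim as Theorem~A of \cite{arsal2015a} and is used as a black box in Sections~\ref{sec:statem-prelim-result} and~\ref{sec:proof}. There is therefore nothing in the present paper to compare your argument against.

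That said, your proposal is a sound direct proof. The forward direction is correct: with $E$ and $F$ (almost) orthogonal, each mixed basis $k$-vector in $\widetilde E$ contributes a term bounded by $\|\wedge^{j}DX_t|_{E}\|\cdot\|\wedge^{k-j}DX_t|_{F}\|$, and your singular-value identity $\|\wedge^{k-j}A\|=|\det A|\cdot\|\wedge^{j}A^{-1}\|$ reduces the quotient by $|\det(DX_t|_{F})|$ to $\bigl(\|DX_t|_{E}\|\cdot\|(DX_t|_{F})^{-1}\|\bigr)^{j}$ with $j\ge1$. The converse via a carefully chosen $w=e\wedge v_1\wedge\dots\wedge v_{k-1}$ is also correct. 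One small adjustment: your appeal to Theorem~\ref{mthm:Jseparated-parthyp} for almost-orthogonality is misplaced, since that result assumes \emph{partial} hyperbolicity, not mere domination. What you actually need is the elementary fact that on a compact $\Gamma$ any continuous invariant splitting can be made orthogonal in some continuous Riemannian metric (take the direct sum of the restricted inner products), and that changing metrics alters the domination constants only by a bounded multiplicative factor. With that substitution the cross-term control you describe goes through without the $\epsilon$-bookkeeping.

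For context, the framework of \cite{arsal2015a} is built around $\J$-separation and infinitesimal Lyapunov functions (cf.\ Theorem~\ref{theo2012} and Proposition~\ref{pr:J-separated-tildeJ} here), so the original proof likely passes through strict $\J$-separation of $DX_t$ and an induced strict separation of $\wedge^k DX_t$, rather than through direct singular-value estimates. Your route is more elementary and self-contained; the quadratic-forms route integrates better with the rest of the machinery but requires that apparatus to be in place.
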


Hence, the existence of a dominated splitting $T_\Gamma M=E_\Gamma\oplus F_\Gamma$ over the compact $X_t$-invariant subset $\Gamma$, is equivalent to the bundle  $\wedge^{k}T_\Gamma M$ admits a dominated splitting with respect to $\wedge^{k}DX_t: \wedge^{k}T_\Gamma M \to \wedge^{k}T_\Gamma M $.

As a consequence, they obtain the next characterization of three-dimensional singular sets.

\begin{corollary}\label{corA} \cite[Corollary 1.5]{arsal2015a}  Assume that $M$ has dimension $3$, $E$ is uniformly contracted by $DX_{t}$, and that
$k=2$. Then $E \oplus F$ is a singular-hyperbolic splitting for $DX_{t}$
  if, and only if,  $ \widetilde E \oplus \widetilde F$ is partially hyperbolic splitting for $\wedge^{2}
  DX_t$ such that $\widetilde F$ is uniformly expanded by $\wedge^{2} DX_t$.
\end{corollary}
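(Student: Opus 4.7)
The plan is to reduce the corollary to two elementary translations. The first is Theorem~\ref{bivectparthyp2}, which already converts domination for $DX_t$ on $E \oplus F$ into domination for $\wedge^2 DX_t$ on $\widetilde E \oplus \widetilde F$ and back. The second is a low-dimensional coincidence: because $\dim M = 3$ and $\dim E = 1$, the central subbundle $F$ is two-dimensional, so $\widetilde F = \wedge^2 F$ is one-dimensional, spanned at each $x \in \Gamma$ by a single wedge $f_1 \wedge f_2$. The identity
\[
\|(\wedge^2 DX_t)(f_1 \wedge f_2)\| = \|DX_t f_1 \wedge DX_t f_2\| = |\det(DX_t|_{F_x})| \cdot \|f_1 \wedge f_2\|
\]
shows that uniform expansion of the one-dimensional bundle $\widetilde F$ by $\wedge^2 DX_t$ is \emph{equivalent} to the existence of $C,\lambda>0$ with $|\det(DX_t|_{F_x})| \geq C e^{\lambda t}$ for all $x \in \Gamma$ and $t > 0$, which is precisely volume expansion of $F$. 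I will call this the volume translation.

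For the forward implication, I would note that singular hyperbolicity of $E \oplus F$ includes domination, which Theorem~\ref{bivectparthyp2} upgrades to domination of $\widetilde E \oplus \widetilde F$ for $\wedge^2 DX_t$; it also includes volume expansion of $F$, which the volume translation upgrades to uniform expansion of $\widetilde F$. Together these give a partially hyperbolic splitting $\widetilde E \oplus \widetilde F$ with $\widetilde F$ uniformly expanded. For the converse, domination of $\widetilde E \oplus \widetilde F$ pulls back to domination of $E \oplus F$ by the same theorem, uniform expansion of $\widetilde F$ pulls back to volume expansion of $F$ by the volume translation, and uniform contraction of $E$ is part of the standing hypothesis of the corollary. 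The three conditions together yield the singular hyperbolic splitting.

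There is no substantial obstacle. The analytic content — that domination passes to and from exterior powers — has been packaged in Theorem~\ref{bivectparthyp2}, and the one-dimensionality of $\widetilde F$ in this dimension count is what makes the volume translation a one-line computation. The hypothesis that $E$ be one-dimensional is precisely what forces $\dim \widetilde F = 1$ and thus permits the clean identification between $\widetilde F$-expansion and $F$-volume-expansion that drives the argument.
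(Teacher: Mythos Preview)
Your proposal is correct and matches the paper's approach: the paper does not prove Corollary~\ref{corA} separately (it is cited from \cite{arsal2015a}) but immediately generalizes it via Lemma~\ref{bivectparthyp1} and Corollary~\ref{bivectparthyp}, whose proof is exactly your ``volume translation'' identity $\|\wedge^k DX_t|_{\widetilde F}\| = |\det(DX_t|_F)|$ (using that $\widetilde F$ is one-dimensional) combined with Theorem~\ref{bivectparthyp2} for the domination part. Your observation that uniform contraction of $E$ in the converse direction comes from the standing hypothesis, not from the $\wedge^2$-splitting, is also the point implicit in the paper's phrasing ``Assume that $E$ is uniformly contracted by $DX_t$'' in Corollary~\ref{bivectparthyp}.
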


The main idea in \cite{arsal2015a} was to give a characterization of sectional hyperbolicity following the well known algebraic feature of cross product.

The absolute value of the \emph{cross product} (also called \emph{vector product}) on a $3$-dimensional vector space $V$, denote by $w = u \times v$, provides the length of the vector $w$. It is very useful to calculate the area expansion of the parallelogram generated by $u, v$, under the action of a linear operator.

Following this way, in \cite{arsal2015a}, the first author and V. Araujo proved the result below.

\begin{theorem} \cite[Theorem  $B$]{arsal2015a} \label{theoAS1}
  Suppose that $X$ is $3$-dimensional vector field on $M$
  which is non-negative strictly
  $\J$-separated over a non-trivial subset $\Gamma$, where
  $\J$ has index $1$. 
Then
\begin{enumerate}
\item $\wedge^{2}DX_t$ is strictly $(-\J)$-separated;
\item $\Gamma$ is a singular hyperbolic set if either one of the following
properties is true
\begin{enumerate}
\item $\widetilde\Delta_0^t(x)\xrightarrow[t\to+\infty]{}-\infty$
    for all $x\in\Gamma$.
\item $\tilde \J-2\trace(DX)\J>0$ on $\Gamma$.
\end{enumerate} 
\end{enumerate}
\end{theorem}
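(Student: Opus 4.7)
The plan is to use the exterior square cocycle $\wedge^2 DX_t$ to reduce singular hyperbolicity, which in dimension $3$ amounts to area expansion in the $2$-dimensional central bundle $F$, to a one-dimensional expansion of $\wedge^2 F$ inside $\wedge^2 T_\Gamma M$, and then to apply the infinitesimal quadratic-form machinery of Proposition~\ref{pr:J-separated} to $\wedge^2 DX_t$.

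For part (1), Theorem~\ref{mthm:Jseparated-parthyp} converts strict $\J$-separation into a partially hyperbolic splitting $T_\Gamma M=E\oplus F$ with $\dim E=1$ and $\dim F=2$, which Theorem~\ref{bivectparthyp2} promotes to a dominated splitting $\wedge^2 T_\Gamma M=\widetilde E\oplus\widetilde F$ with $\dim\widetilde E=2$ and $\dim\widetilde F=\dim(\wedge^2 F)=1$. To identify the relevant quadratic form, choose an orthonormal basis adapted to $E\oplus F$ so that $J_x=\diag(-1,+1,+1)$; the bilinear-extension formula produces an induced form on $\wedge^2 T_xM$ with matrix $\diag(-1,-1,+1)$ in the basis $(e_1\wedge e_2,\,e_1\wedge e_3,\,e_2\wedge e_3)$. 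The Hodge star in dimension $3$ identifies $\wedge^2 T_xM$ with $T_xM$ by $e_2\wedge e_3\mapsto e_1$, $e_3\wedge e_1\mapsto e_2$, $e_1\wedge e_2\mapsto e_3$, and under this identification the induced form becomes $\diag(+1,-1,-1)=-\J$. In this picture $\widetilde F$ is the $1$-dimensional $(-\J)$-positive line and $\widetilde E$ the $(-\J)$-negative plane; since $\widetilde F$ is the dominating direction of the exterior splitting, the converse implication in Theorem~\ref{mthm:Jseparated-parthyp} applied to the cocycle $\wedge^2 DX_t$ and the form $-\J$ gives strict $(-\J)$-separation.

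For part (2), singular hyperbolicity of $\Gamma$ with the dominated splitting $E\oplus F$ already in hand is equivalent to area expansion of $F$, and since $|\det(DX_t|_{F_x})|=\|\wedge^2 DX_t|_{\widetilde F_x}\|$ this is the same as uniform expansion of the $1$-dimensional $\widetilde F$ under $\wedge^2 DX_t$. Apply Proposition~\ref{pr:J-separated} to $\wedge^2 DX_t$ with the compatible form $-\J$ from part (1); its infinitesimal generator is the derivation $\wedge^2 DX$ defined by $(\wedge^2 DX)(u\wedge v)=DX(u)\wedge v+u\wedge DX(v)$, whose trace satisfies $\trace(\wedge^2 DX)=2\trace(DX)$. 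The proposition produces a function $\widetilde\delta:U\to\RR$ and an operator $\widetilde J_0=(-J_0)(\wedge^2 DX)+(\wedge^2 DX)^*(-J_0)$ with $\widetilde J_0-\widetilde\delta\,(-J_0)$ positive definite, and a standard Gronwall-type integration along the flow then yields
\begin{equation*}
\|\wedge^2 DX_t|_{\widetilde F_x}\|\ge c\exp\Big(\tfrac12\int_0^t\widetilde\delta(X_s(x))\,ds\Big).
\end{equation*}
Under hypothesis (a), $\widetilde\Delta_0^t(x)$ is, up to sign, the integrated rate $\int_0^t\widetilde\delta\circ X_s\,ds$, so $\widetilde\Delta_0^t\to-\infty$ together with the continuity of $\widetilde\delta$ and the compactness of $\Gamma$ forces uniform exponential growth of $\|\wedge^2 DX_t|_{\widetilde F}\|$. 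Under hypothesis (b), the pointwise inequality $\tilde\J-2\trace(DX)\J>0$ is an algebraic rewriting of the positive definiteness of $\widetilde J_0$, the term $-2\trace(DX)\J$ coming from the trace of the derivation $\wedge^2 DX$; compactness of $\Gamma$ then gives a uniform positive lower bound for $\widetilde\delta$ and hence the same uniform exponential expansion.

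The main difficulty is the algebraic bookkeeping in part (2): identifying the operator $\widetilde J_0$ associated with $\wedge^2 DX_t$ explicitly in terms of the original $J_0$ and $DX$, and carrying out the computation that extracts precisely the correction $-2\trace(DX)\J$, so that conditions (a) and (b) can be matched against the same infinitesimal quantity $\widetilde\delta$. Once this identification is in place, the step from strict cone preservation on $\wedge^2$ to uniform expansion of $\widetilde F$, and the assembly of the resulting area expansion with the already established partial hyperbolicity into singular hyperbolicity of $\Gamma$, are routine compactness and integration arguments.
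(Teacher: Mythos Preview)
Your overall plan is sound and close to the paper's, but the execution of the key algebraic step in part~(2) is where your sketch and the paper diverge.

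For part~(1) you take an indirect route: strict $\J$-separation $\Rightarrow$ partial hyperbolicity of $E\oplus F$ (Theorem~\ref{mthm:Jseparated-parthyp}) $\Rightarrow$ domination of $\widetilde E\oplus\widetilde F$ (Theorem~\ref{bivectparthyp2}) $\Rightarrow$ strict $(-\J)$-separation of $\wedge^2 DX_t$. The last implication should invoke Theorem~\ref{theo2012}, not Theorem~\ref{mthm:Jseparated-parthyp}: at that stage you only have domination of $\widetilde E\oplus\widetilde F$, not partial hyperbolicity, and Theorem~\ref{mthm:Jseparated-parthyp} also carries a ``non-negative vector field'' clause that has no analogue for the abstract cocycle $\wedge^2 DX_t$. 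With that correction your route works. The paper instead bypasses this detour entirely: it proves strict $(-\J)$-separation by a direct infinitesimal computation, exhibiting the function $\delta_{m-1}=2\trace(DX)-\delta$ explicitly.

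For part~(2) the paper's engine is Lemma~\ref{lemmatheoA}, the cofactor identity $\wedge^{m-1}A=\det(A)\,(A^{-1})^*$ under the Hodge identification $\wedge^{m-1}T_xM\cong T_xM$. Differentiating at $t=0$ gives the infinitesimal generator $D^{(m-1)}=\trace(DX)\cdot Id - DX^*$, and plugging this into $(-J)D^{(m-1)}+(D^{(m-1)})^*(-J)$ yields $\tilde\J-2\trace(DX)\J$ in one line (equation~\eqref{eq:square-generator}). Your heuristic that ``the term $-2\trace(DX)\J$ comes from the trace of the derivation $\wedge^2 DX$'' is not the right mechanism: the scalar $\trace(\wedge^2 DX)=2\trace(DX)$ alone does not determine $\widetilde J_0$; what matters is the full operator identity $*\,d_{DX}\,*^{-1}=\trace(DX)\,Id-DX^*$ (equivalently, $Au\times v+u\times Av=(\trace(A)I-A^T)(u\times v)$ in dimension~$3$). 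You correctly flag this identification as the ``main difficulty,'' but your proposal does not supply it, and without it condition~(b) cannot be matched to positive definiteness of $\widetilde J_0$. Once that identity is in place, the paper finishes by appealing to Proposition~\ref{pr:char-dom-split} rather than a separate Gronwall argument.
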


Here, we generalized this result to $m$ and $k =m-1$, as follows.

\subsection{Statements of results} \label{sec:statement-result}
\hfill

If $\wedge^{k}DX_t$ is strictly separated with respect to some family $\J$ of quadratic forms, then there exists the function $\delta_k$ as stated in Proposition~\ref{pr:J-separated} with respect to the cocyle $\wedge^{k}DX_t$.  We set $$\widetilde\Delta_a^b(x) := \int_a^b\delta_{k}(X_s(x))\,ds$$ the
area under the function $\delta_{k}:U\to\RR$ given by Proposition~\ref{pr:J-separated} with respect to $\wedge^{k}DX_t$ and its infinitesimal generator.

If $k =m-1$, it is not difficult to see that this function is related to $X$ and $\delta$ as follows: let $\delta:\Gamma\to\RR$ be the function
associated to $\J$ and $DX_t$, as given by Proposition \ref{pr:J-separated}, then $\delta_k=2\trace(DX)-\delta$, where $\trace(DX)$ represents the trace
of the linear operator $DX_x:T_xM\circlearrowleft, x\in M$.

We recall that $\tilde \J = \partial_t \J$ is the time derivative of $\J$ along the flow; see Remark~\ref{rmk:derivativeJ}.

\begin{maintheorem}
  \label{mthm:sec-exp-md}
  Suppose that $X$ is $m$-dimensional vector field on $M$
  which is non-negative strictly
  $\J$-separated over a non-trivial subset $\Gamma$, where
  $\J$ has index $1$. 
Then
\begin{enumerate}
\item $\wedge^{(m-1)}DX_t$ is strictly $(-\J)$-separated; \item $\Gamma$ is a singular hyperbolic set if either one of the following properties is
    true
\begin{enumerate}
\item $\widetilde\Delta_0^t(x)\xrightarrow[t\to+\infty]{}-\infty$
    for all $x\in\Gamma$.
\item $\tilde \J-2\trace(DX)\J>0$ on $\Gamma$.
\end{enumerate}

    \end{enumerate}
    \end{maintheorem}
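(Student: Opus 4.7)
The plan is to adapt the template of the three-dimensional result (Theorem~\ref{theoAS1}) to $k=m-1$, leveraging the fact that non-negative strict $\J$-separation of $X_t$ with $\J$ of index one already forces partial hyperbolicity of $\Gamma$ by Theorem~\ref{mthm:Jseparated-parthyp}, so that $T_\Gamma M=E\oplus F$ with $\dim E=1$ uniformly contracted. What remains for singular hyperbolicity is volume expansion of the $(m-1)$-dimensional central bundle $F$; since $\dim F=m-1$, this is equivalent to uniform expansion under $\wedge^{m-1}DX_t$ of the one-dimensional line bundle $\widetilde F=\wedge^{m-1}F\subset\wedge^{m-1}T_\Gamma M$. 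Part~(1) transports the strict separation property from $DX_t$ to $\wedge^{m-1}DX_t$; part~(2) extracts the required exponential estimate on $\widetilde F$ from either of the hypotheses (a) or (b).

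For part~(1), I would first invoke Theorem~\ref{theo2012} to convert strict $\J$-separation of $DX_t$ into the dominated splitting $E\oplus F$ with $\dim E=1$ and $\dim F=m-1$, then pass to the exterior power via Theorem~\ref{bivectparthyp2} to obtain the dominated splitting $\widetilde E\oplus\widetilde F$ of $\wedge^{m-1}T_\Gamma M$ for $\wedge^{m-1}DX_t$. Applying the reverse direction of Theorem~\ref{theo2012} yields a $C^1$ field of non-degenerate quadratic forms with respect to which the exterior cocycle is strictly separated. A direct computation in a local frame adapted to $E\oplus F$ then identifies this field, in the convention inherited from the three-dimensional model, with the extension of $-\J$ to $\wedge^{m-1}TM$: writing a basis $\{e_1,f_1,\dots,f_{m-1}\}$ with $e_1\in E$ and $f_i\in F$, the induced form assigns to $f_1\wedge\cdots\wedge f_{m-1}$ the sign opposite to that of the $e_1\wedge f_{i_1}\wedge\cdots\wedge f_{i_{m-2}}$, so that $\widetilde F$ sits on the expanding side of the strict $(-\J)$-cone as in the $3$-dimensional case.

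For part~(2), applying Proposition~\ref{pr:J-separated} to the cocycle $\wedge^{m-1}DX_t$ and the form $(-\J)$ from part~(1) produces a function $\delta_k\colon U\to\RR$, whose orbit integral equals $\widetilde\Delta_0^t$, and a pointwise inequality that, together with a Gronwall argument on orbits, delivers the estimate
\begin{equation*}
  \bigl|(-\J)\bigl(\wedge^{m-1}DX_t(x)\cdot v\bigr)\bigr|\;\geq\;e^{\widetilde\Delta_0^t(x)}\,\bigl|(-\J)(v)\bigr|,\qquad v\in\widetilde F_x.
\end{equation*}
The key algebraic identity $\delta_k=2\trace(DX)-\delta$, obtained by differentiating $\wedge^{m-1}DX_t$ through Jacobi's formula and comparing infinitesimal generators on $\wedge^{m-1}TM$, links $\delta_k$ directly to the datum $\delta$ associated to the original $\J$-separation. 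Under hypothesis (a), the assumption $\widetilde\Delta_0^t(x)\to-\infty$ combined with the displayed inequality and the sign of $(-\J)$ on $\widetilde F$ translates into uniform exponential expansion of $\wedge^{m-1}DX_t|_{\widetilde F}$; under hypothesis (b), the pointwise strict inequality $\tilde\J-2\trace(DX)\J>0$ becomes, via the identity, $\delta-2\trace(DX)<0$ uniformly on $\Gamma$, i.e.\ $\delta_k<-c<0$ on the compact set $\Gamma$ by continuity, and integrating gives the same conclusion.

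The main obstacle is the correct identification of the quadratic form on $\wedge^{m-1}TM$ and the verification of the identity $\delta_k=2\trace(DX)-\delta$, because one must carefully track sign and index conventions in the Hodge-type extension of $\J$ so that the strict cone invariance genuinely controls the one-dimensional line $\widetilde F$ whose expansion encodes $|\det DX_t|_F|$. Once this identification and the identity are in place, both (a) and (b) reduce by Gronwall and compactness to uniform expansion of $\widetilde F=\wedge^{m-1}F$, i.e.\ volume expansion of $F$, which together with the partial hyperbolicity already supplied by Theorem~\ref{mthm:Jseparated-parthyp} upgrades $\Gamma$ to a singular hyperbolic set.
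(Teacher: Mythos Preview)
Your overall architecture is right: reduce singular hyperbolicity to uniform expansion of the line $\widetilde F=\wedge^{m-1}F$ under $\wedge^{m-1}DX_t$, and control this via the quadratic-form machinery. But your route to part~(1) has a genuine gap, and it is precisely the ``main obstacle'' you flag without resolving.

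Going through Theorem~\ref{theo2012} forward, then Theorem~\ref{bivectparthyp2}, then Theorem~\ref{theo2012} backward only tells you that $\wedge^{m-1}DX_t$ is strictly separated with respect to \emph{some} compatible field of forms; it does not hand you $-\J$. Knowing that $\widetilde F$ is $(-\J)$-positive and $\widetilde E$ is $(-\J)$-negative (your sign check in the adapted frame) is compatibility, not strict separation, and Theorem~\ref{theo2012} does not assert that every compatible form is strictly separating. The paper avoids this detour entirely by proving the algebraic identity (Lemma~\ref{lemmatheoA})
\[
\wedge^{m-1}A=\det(A)\,(A^{-1})^{*}
\]
under the Hodge identification $\wedge^{m-1}T_xM\cong T_xM$. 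Differentiating at $t=0$ gives the infinitesimal generator $D^{(m-1)}(x)=\trace(DX(x))\,Id-DX(x)^{*}$, and then a one-line computation with Proposition~\ref{pr:J-separated-tildeJ} yields
\[
\partial_t(-\J)\bigl(\wedge^{m-1}DX_t\cdot v\bigr)\big|_{t=0}=\bigl(\tilde\J-2\trace(DX)\J\bigr)(v),
\]
so strict $(-\J)$-separation and the identity $\delta_{m-1}=2\trace(DX)-\delta$ drop out simultaneously from the strict $\J$-separation hypothesis. Your appeal to ``Jacobi's formula'' is pointing at the right derivative, but without the cofactor identity above you do not have the identification of $\wedge^{m-1}DX_t$ with a cocycle on $T_\Gamma M$ itself, which is what makes $-\J$ the correct form rather than merely a plausible one.

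For part~(2) the paper does not run a Gronwall argument on your displayed inequality; it simply feeds the conclusions of part~(1) into Proposition~\ref{pr:char-dom-split} (items~(1) and~(3)) to obtain the required expansion of $\widetilde F$, and then Corollary~\ref{bivectparthyp} converts this back to volume expansion of $F$. Note also that your displayed lower bound $e^{\widetilde\Delta_0^t(x)}$ tends to $0$ under hypothesis~(a), so it cannot by itself produce expansion of $\widetilde F$; one really needs the full dichotomy of Proposition~\ref{pr:char-dom-split} rather than the raw monotonicity estimate from Proposition~\ref{pr:J-separated-tildeJ}(4).
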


We  work with exterior products of codimension one. See \cite{elon} for more details.

This result provides useful sufficient conditions for a $m$-dimensional vector field to be singular hyperbolic if $k=m-1$, using only one family of
quadratic forms $\J$ and its space derivative $DX$, avoiding the need to check cone invariance and contraction/expansion conditions for the flow $X_t$
generated by $X$ on a neighborhood of $\Gamma$.

\begin{definition} \label{adapmetric3d} We say a Riemannian metric $\langle\cdot,\cdot\rangle$ \emph{adapted to a singular hyperbolic splitting}
$T\Gamma = E \oplus F$ if it induces a norm $|\cdot|$ such that there exists $\lambda>0$ satisfying for all $x\in \Gamma$ and $t>0$ simultaneously
    \begin{align*}
    |DX_t\mid_{E_x}|\cdot\big|(DX_t\mid_{F_x})^{-1}|
    \le e^{-\lambda t}, \ 
     |DX_t\mid_{E_x}|\le e^{-\lambda t}
\quad\text{and}\quad \vert \det (DX_t \mid_{F_x})\vert \ge e^{\lambda t}. \end{align*} We call it \emph{singular adapted metric}, for simplicity.
\end{definition}

In \cite{arsal2015a}, the first author and V. Araujo proved the following.

\begin{theorem} \cite[Theorem  $C$]{arsal2015a} \label{theoAS2}
  Let $\Gamma$ be a singular-hyperbolic
  set for a $C^1$ three-dimensional vector field $X$.
  Then $\Gamma$ admits a singular adapted metric.
\end{theorem}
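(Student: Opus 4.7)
The plan is to build the adapted metric in two independent stages: first handle the partial-hyperbolicity part (contraction of $E$ and domination $E \prec F$) via a known adapted metric result, and then upgrade the volume expansion on $F$ via an exterior-power argument, combining the two by a conformal rescaling on $F$.

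First I would invoke Gourmelon's adapted metric construction for partially hyperbolic splittings (cited in the introduction of this paper). Since singular hyperbolicity implies partial hyperbolicity with $E$ uniformly contracted, this produces a Riemannian metric $\langle\cdot,\cdot\rangle_0$ on $T_\Gamma M$ (say with $E \perp F$ and making the splitting almost orthogonal, as in Theorem~\ref{mthm:Jseparated-parthyp}) such that, for some $\lambda_0>0$ and every $x\in\Gamma$, $t>0$,
\begin{equation*}
|DX_t\mid_{E_x}|_0 \le e^{-\lambda_0 t}
\qand
|DX_t\mid_{E_x}|_0\cdot |(DX_t\mid_{F_x})^{-1}|_0 \le e^{-\lambda_0 t}.
\end{equation*}
This already gives two of the three inequalities required in Definition~\ref{adapmetric3d}, so only the sharp volume expansion on $F$ remains.

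Next I would read off the volume expansion on $F$ from the exterior-power characterization. Since $\dim M=3$, $\dim E=1$, $\dim F=2$, Corollary~\ref{corA} gives that $\widetilde F=\wedge^2 F$ is uniformly expanded by $\wedge^2 DX_t$ in the metric induced by $\langle\cdot,\cdot\rangle_0$ on $\wedge^2 T_\Gamma M$. Because $\wedge^2 F$ is a line bundle and $\|\wedge^2 DX_t\mid_{\wedge^2 F_x}\|_0 = |\det(DX_t\mid_{F_x})|_0$, this means $|\det(DX_t\mid_{F_x})|_0 \ge C e^{\mu t}$ for some $C,\mu>0$; but we need to upgrade the constant $C$ to $1$. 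For this I would apply the standard Mather-type averaging directly to the one-dimensional cocycle $\wedge^2 DX_t\mid_{\wedge^2 F}$: choose $T>0$ large and set
\begin{equation*}
\phi(x)^2 := \int_0^T e^{-\mu s}\,|\det(DX_s\mid_{F_x})|_0^{-2}\,ds,
\end{equation*}
so that $\phi:\Gamma\to\RR_+$ is continuous and bounded, and define a new inner product on $F$ by the conformal rescaling $|v|_F := \phi(x)^{-1}|v|_0$ for $v\in F_x$. A direct calculation (using the cocycle identity and the exponential lower bound on the original determinant) shows that, for $T$ large enough depending on $\mu$ and the original constants, $|\det(DX_t\mid_{F_x})|_F \ge e^{\lambda t}$ for all $t>0$ and some $\lambda\in(0,\mu)$.

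Finally I would assemble the adapted metric by keeping $\langle\cdot,\cdot\rangle_0$ on $E$, using the rescaled inner product on $F$, and declaring $E\perp F$. The contraction inequality on $E$ is untouched, and the volume expansion on $F$ holds by construction. The delicate point -- and the main obstacle of this plan -- is to check that the conformal rescaling on $F$ does not destroy the domination inequality. Under the rescaling, $|(DX_t\mid_{F_x})^{-1}|_F = \frac{\phi(X_t x)}{\phi(x)}\,|(DX_t\mid_{F_x})^{-1}|_0$, and since $\phi$ is continuous on the compact set $\Gamma$ the ratio is globally bounded; so after shrinking $\lambda_0$ and absorbing a constant into another round of integration (again Mather-style), domination persists with $K=1$. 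Alternatively, one can argue that strict $\J$-separation is preserved by conformal changes on one subbundle in an almost-orthogonal splitting, and then reapply Gourmelon to the resulting dominated splitting to recover the adapted domination bound with $K=1$ while leaving the sharp volume expansion intact.
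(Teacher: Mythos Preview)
Your route diverges from the paper's, and the ``delicate point'' you flag is a genuine gap. After conformally rescaling $F$ by $\phi$, the domination product acquires the factor $\phi(X_tx)/\phi(x)$; this is bounded on $\Gamma$ but need not be $\le1$ for small $t>0$, so merely shrinking $\lambda_0$ cannot absorb it. Neither repair you propose closes the gap: a further Mather rescaling of $E$ alone restores sharp domination but transfers the same bounded cocycle factor to the contraction estimate on $E$, so you have only moved the problem; and reapplying Gourmelon to the full partially hyperbolic splitting averages the metric on $F$ as well as on $E$, which destroys the sharp volume bound you just arranged on $F$. The three conditions are coupled, and iterated one-subbundle conformal repairs cycle the defect among them without eliminating it.

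The paper organizes the argument differently so that this circularity never arises. It applies Gourmelon not to $DX_t$ but to $\wedge^{2} DX_t$ on $\wedge^{2} T_\Gamma M$, whose splitting $\widetilde E\oplus\widetilde F$ has the one-dimensional $\widetilde F=\wedge^{2} F$ uniformly expanding (Corollary~\ref{bivectparthyp}); this yields a metric $\langle\cdot,\cdot\rangle_*$ with $\|\wedge^{2} DX_t\mid_{\widetilde F}\|_*\ge e^{\lambda t}$ already sharp. The step with no analogue in your plan is Lemma~\ref{lemma1}: because the exterior power has degree $m-1$, every inner product on $\wedge^{m-1}T_\Gamma M$ is induced by one on $T_\Gamma M$, so $\langle\cdot,\cdot\rangle_*$ pulls back to a genuine Riemannian metric on $T_\Gamma M$ in which $|\det(DX_t\mid_{F_x})|\ge e^{\lambda t}$ holds automatically. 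A compatible $\J$-norm $|w|=\xi\sqrt{\J(w_E)^2+\J(w_F)^2}$ then handles domination, and contraction on $E$ is not obtained from Gourmelon at all: it is \emph{derived} from sharp domination using $X(x)\in F_x$ (Lemma~\ref{le:flow-center}) together with the free global scalar $\xi$, chosen so that $\sup_\Gamma|X|\le1$; a global rescaling changes neither the domination ratio nor the determinant on $F$. The idea missing from your plan is precisely this last one: once domination and volume expansion are simultaneously sharp, contraction comes for free from the flow direction, so the three-way repair cycle never has to start.
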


Here, we generalize this result for any codimension one singular hyperbolic flow in higher dimensional manifolds. Consider a partially hyperbolic
splitting $T_\Gamma M=E\oplus F$ where $E$ is uniformly contracted and $F$ is volume expanding. We show that  for $C^1$ flows having a
singular-hyperbolic set $\Gamma$ such that  $E$ is one-dimensional subspace there exists a metric adapted to the partial hyperbolicity and the area
expansion, as follows.

\begin{maintheorem}
  \label{mthm1}
	Let $\Gamma$ be a singular-hyperbolic
  set of codimension one for a $C^1$ $m$-dimensional vector field $X$.
  partially hyperbolic splitting satisfying (\ref{eq:domination}) together with  uniform contraction along $E$
  and volume expanding  along $F$ such that $\dim E = 1$.
  Then $\Gamma$ admits a singular adapted metric.
\end{maintheorem}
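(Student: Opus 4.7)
The plan is to build the singular adapted metric in three stages, each of which reduces to an adapted-norm problem on a line bundle thanks to the hypotheses $\dim E = 1$ and $\dim F = m-1$.

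\emph{Stage 1: partial hyperbolicity on $TM$.} Apply Gourmelon's theorem \cite{Goum07} to the partially hyperbolic splitting $T_\Gamma M = E \oplus F$ to obtain a smooth Riemannian metric $\langle\cdot,\cdot\rangle_1$ and a rate $\lambda_1>0$ satisfying $|DX_t|_{E_x}|_1 \le e^{-\lambda_1 t}$, $|DX_t|_{E_x}|_1 \cdot |(DX_t|_{F_x})^{-1}|_1 \le e^{-\lambda_1 t}$, with $E \perp F$. In this metric volume expansion on $F$ still holds but only with some constant: $|\det DX_t|_{F_x}|_1 \ge C e^{\lambda_1 t}$ for some $C>0$.

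\emph{Stage 2: adapt the volume expansion via the exterior power.} By Theorem~\ref{bivectparthyp2} the induced splitting $\widetilde E \oplus \widetilde F$ of $\wedge^{m-1}T_\Gamma M$ is dominated, and the line bundle $\widetilde F = \wedge^{m-1}F$ is uniformly expanded at rate $Ce^{\lambda_1 t}$ by $\wedge^{m-1}DX_t$. For any $\lambda_2 \in (0,\lambda_1)$ the one-dimensional adapted-norm construction $|\omega|^* := \sup_{s\ge 0} e^{\lambda_2 s}|\wedge^{m-1}DX_{-s}\omega|_1$ produces a continuous function $c:\Gamma \to [1,C^{-1}]$ with $|\omega|^* = c(x)|\omega|_1$ realizing the adapted expansion $|\wedge^{m-1}DX_t|_{\widetilde F_x}|^* \ge e^{\lambda_2 t}$. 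Pull this rescaling back to $F$ by declaring $\langle\cdot,\cdot\rangle^* := c(x)^{2/(m-1)}\langle\cdot,\cdot\rangle_1$ on $F$, keeping $\langle\cdot,\cdot\rangle_1$ on $E$ and $E \perp F$. Since scaling an $(m-1)$-dimensional inner product space uniformly by $\alpha$ scales its top exterior power by $\alpha^{m-1}$, the induced metric on $\widetilde F_x$ is exactly $c(x)|\cdot|_1$, so $|\det DX_t|_{F_x}|^* \ge e^{\lambda_2 t}$ by construction. The contraction of $E$ is unchanged, and the domination is degraded only by a uniform multiplicative constant: a direct computation gives $|DX_t|_{E_x}|^* \cdot |(DX_t|_{F_x})^{-1}|^* \le K e^{-\lambda_1 t}$ with $K = C^{-1/(m-1)}\ge 1$, using $c \in [1,C^{-1}]$.

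\emph{Stage 3: restore adapted domination via the line bundle $E$.} Since the domination in Stage 2 holds with a bounded constant $K$, a final Gourmelon-type adapted-norm adjustment applied to the one-dimensional $E$ absorbs $K$ and yields an adapted domination at some rate $\lambda_3 \in (0,\lambda_2)$ without disturbing the metric on $F$; in particular the adapted volume expansion is preserved intact and the contraction of $E$ remains adapted (possibly at the smaller rate $\lambda_3$). Setting $\lambda := \lambda_3$ delivers the singular adapted metric. The principal technical obstacle is exactly this last stage: one must obtain simultaneous compatibility of all three adapted inequalities, and the reason it succeeds is that $\dim E = 1$ turns the domination fix into a scalar rescaling of a line bundle that leaves the volume form on $F$ invariant, decoupling completely from the volume-expansion adjustment carried out in Stage 2. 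The analogous hypothesis $\dim F = m-1$ is equally decisive in Stage 2, converting volume expansion on $F$ into ordinary expansion on the line bundle $\wedge^{m-1}F$. Without codimension one both decouplings break down, and a substantially different construction would be required.
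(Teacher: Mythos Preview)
Your approach is genuinely different from the paper's, but Stage~3 contains a real gap.

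\textbf{What the paper does.} The paper works on the exterior power first: it applies Gourmelon to the partially hyperbolic splitting $\widetilde E\oplus\widetilde F$ of $\wedge^{m-1}T_\Gamma M$ (with $\widetilde F$ one-dimensional and uniformly expanded), and then invokes Lemma~\ref{lemma1} --- the key technical lemma, which uses that $A\mapsto\wedge^{m-1}A$ is a bijection $GL(T_xM)\to GL(\wedge^{m-1}T_xM)$ when $\dim E=1$ --- to pull the resulting inner product back to $T_\Gamma M$. Adapted domination then comes from the quadratic-form norm of \cite{arsal2012a}, adapted volume expansion on $F$ is automatic because $|\det DX_t|_F|=\|\wedge^{m-1}DX_t|_{\widetilde F}\|$ in the induced metric, and adapted contraction on $E$ is \emph{derived from} the already-adapted domination via the flow-direction trick $X(x)\in F_x$ (Lemma~\ref{le:flow-center}) together with a global constant rescaling making $\sup_\Gamma|X|\le1$.

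\textbf{The gap in your Stage 3.} You claim that a scalar rescaling of the line bundle $E$ alone can absorb the constant $K$ in the domination while keeping contraction adapted and leaving $F$ untouched. But any rescaling of $E$ by a function $b(x)$ multiplies both $|DX_t|_{E_x}|$ and the domination product $|DX_t|_{E_x}|\cdot|(DX_t|_{F_x})^{-1}|$ by the \emph{same} coboundary $b(X_t(x))/b(x)$. The second factor $|(DX_t|_{F_x})^{-1}|$ is only submultiplicative, not a one-dimensional multiplicative cocycle, so the standard adapted-norm trick $b(x)=\sup_{t\ge0}e^{\lambda_3 t}(\,\cdot\,)$ does not apply to it. Concretely: if you choose $b=c^{1/(m-1)}$ to cancel the Stage~2 coboundary in the domination, you recover adapted domination but introduce a constant in the contraction; if you then apply the one-dimensional sup-construction to the (multiplicative) contraction cocycle to make contraction adapted again, the resulting coboundary $b'(X_t(x))/b'(x)$ need not be bounded by $e^{(\lambda_1-\lambda_3)t}$ (this would require a \emph{lower} bound on $|DX_t|_E|$, which is not available), so domination re-acquires a constant. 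You end up in a see-saw; the claimed ``decoupling'' does not occur. Gourmelon's actual construction for dominated splittings modifies the metric on both $E$ and $F$, which would disturb your Stage~2 volume normalization.

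The paper sidesteps this entirely: it never tries to fix contraction and domination independently, but instead deduces adapted contraction \emph{from} adapted domination using $X(x)\in F_x$. That is the missing idea in your argument.
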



\begin{remark} Using the Theorem \ref{theoAS1}, the first author and V. Araujo provided in \cite{arsal2015a} a proof of the Theorem \ref{theoAS2}.
Analogously, we can show the Theorem \ref{mthm1} using the Theorem \ref{mthm:sec-exp-md}. Here, we'll present an alternative proof for Theorem
\ref{mthm1}  that is  independent  of the Theorem \ref{mthm:sec-exp-md}. \end{remark}

We briefly present these results in what follows with the relevant definitions.

\section{Auxiliary results}\label{sec:statem-prelim-result}

\subsection{Properties of $\J$-separated linear multiplicative cocycles} \label{sec:propert-j-separat}
\hfill

We present some useful properties about $\J$-separated linear cocycles whose proofs can be found in \cite{arsal2012a}.

Let $A_t(x)$ be a linear multiplicative cocycle over $X_t$. We define the infinitesimal generator of $A_t(x)$ by
\begin{align}\label{eq:infinitesimal-gen}
  D(x):=\lim_{t\to0}\frac{A_t(x)-Id}t.
\end{align}

The following is the basis of our arguments leading to Theorem~\ref{mthm:Jseparated-parthyp}.

The area under the function $\delta$ provided by Proposition~\ref{pr:J-separated-tildeJ} allows us to detect different dominated splittings with respect
to linear multiplicative cocycles on vector bundles (Proposition~\ref{pr:char-dom-split}). For this, define the function
    \begin{align}\label{eq:delta-area}
     \Delta_a^b(x):=\int_a^b\delta(X_s(x))\,ds, \quad
     x\in\Gamma, a,b\in\RR.
    \end{align}

\begin{proposition}\cite[Proposition 2.7]{arsal2012a}
  \label{pr:J-separated-tildeJ}
  Let $A_t(x)$ be a cocycle over $X_t$ defined on an open
  subset $U$ and $D(x)$ its infinitesimal generator. Then
  \begin{enumerate}
  \item $\tilde\J(v)=\partial_t \J(A_t(x)v) =
    \langle \tilde J_{X_t(x)} A_t(x)v,A_t(x)v\rangle$ for
    all $v\in E_x$ and $x\in U$, where
    \begin{align}\label{eq:J-separated-tildeJ}
      \tilde J_x:= J\cdot D(x) + D(x)^* \cdot J
    \end{align}
    and $D(x)^*$ denotes the adjoint of the linear map
    $D(x):E_x\to E_x$ with respect to the adapted inner
    product at $x$;
  \item the cocycle $A_t(x)$ is $\J$-separated if, and only
    if, there exists a neighborhood $V$ of $\Lambda$,
    $V\subset U$ and a function $\delta:V\to\RR$ such that
    \begin{align}\label{eq:J-ge}
      \tilde \J_x\ge\delta(x)\J_x
      \quad\text{for all}\quad x\in V.
    \end{align}
    In particular we get $\partial_t\log|\J(A_t(x)v)|\ge
    \delta(X_t(x))$, $v\in E_x, x\in V, t\ge0$;
  \item if the inequalities in the previous item are strict,
    then the cocycle $A_t(x)$ is strictly
    $\J$-separated. Reciprocally, if $A_t(x)$ is strictly
    $\J$-separated, then there exists a compatible family
    $\J_0$ of forms on $V$ satisfying the strict
    inequalities of item (2).
  \item For a $\J$-separated cocycle $A_t(x)$, we have
    $\frac{|\J(A_{t_2}(x)v)|}{|\J(A_{t_1}(x)v)|}\ge \exp
    \Delta_{t_1}^{t_2}(x)$ for all $v\in E_x$ and reals
    $t_1<t_2$ so that $\J(A_t(x)v)\neq0$ for all $t_1\le
    t\le t_2$, where $\Delta_{t_1}^{t_2}(x)$ was defined
    in~(\ref{eq:delta-area}).
\item we can bound $\delta$ at every $x\in\Gamma$ by
    $\inf_{v\in C_+(x)}\frac{\tilde\J(v)}{\J(v)}
    \le\delta(x)\le
    \sup_{v\in C_-(x)}\frac{\tilde\J(v)}{\J(v)}.$
  \end{enumerate}
\end{proposition}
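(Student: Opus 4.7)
The plan is to prove the five items sequentially; items (1) and (2) form the analytic core, and items (3)--(5) refine or repackage item (2).

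For item (1), I would work in the locally adapted coordinates in which $J_x$ is represented by a constant diagonal matrix with $\pm 1$ entries, so that $J$ does not vary with $x$. At $t=0$ the Leibniz rule gives
\begin{align*}
\partial_s|_{s=0}\,\J(A_s(x)v)
&= \partial_s|_{s=0} \langle J_x A_s(x)v,\, A_s(x)v\rangle \\
&= \langle J_x D(x) v, v\rangle + \langle J_x v,\, D(x)v\rangle
= \langle \tilde J_x v, v\rangle.
\end{align*}
For general $t > 0$ I would use the cocycle identity $A_{t+s}(x) = A_s(X_t(x))\,A_t(x)$: differentiating in $s$ at $s=0$ transfers the computation to the base-point $X_t(x)$ acting on $A_t(x)v$ and yields the stated formula for all $t$.

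For item (2), the $(\Leftarrow)$ direction follows directly from item (1): the hypothesis $\tilde\J_x \geq \delta(x)\J_x$ gives $\partial_t \J(A_t(x)v) \geq \delta(X_t(x))\,\J(A_t(x)v)$, and on any interval where $\J(A_t v) > 0$ a Gr\"onwall-type integration yields $\J(A_t v) \geq \J(v)\exp\Delta_0^t(x)$; starting with $v \in C_+(x)$ this prevents $\J(A_t v)$ from ever reaching zero, so $C_+$ is invariant. The reverse direction is where item (5) enters: one sets $\delta(x)$ to lie between $\sup_{w \in C_-(x)} \tilde\J(w)/\J(w)$ and $\inf_{v \in C_+(x)} \tilde\J(v)/\J(v)$, and the crucial point is that these bounds are compatible. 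Compatibility comes from the boundary behavior on the null cone: $\J$-separation forces $A_t u \in \overline{C_+(X_t(x))}$ for $u \in C_0(x)$, so $\tilde\J(u) = \partial_t|_{t=0}\J(A_t u) \geq 0$; this sign on $C_0$, together with the homogeneity of $\tilde\J/\J$ and a limit argument approaching $C_0$ from either side, rules out any overlap between the two cone-side ranges.

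Item (3) runs the same scheme with strict inequalities. For the converse, strict $\J$-separation of $C_+ \cup C_0$ into the open cone $C_+$ gives a uniform open-cone gap, which lets me produce a small perturbation $\J_0$ of $\J$ (add a small multiple of an auxiliary positive-definite form transverse to the cone boundary, tuned to preserve the index and the compatibility) satisfying the strict inequality of item (2) on a neighborhood $V$. Item (4) is precisely the Gr\"onwall integration from the $(\Leftarrow)$ argument above, upgraded to absolute values by the same computation applied on $C_-$ (where the sign of $\J$ flips), yielding $|\J(A_{t_2}v)|/|\J(A_{t_1}v)| \geq \exp\Delta_{t_1}^{t_2}(x)$. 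Item (5) simply records the constructive bracket already used in the $(\Rightarrow)$ direction of item (2).

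The main obstacle is the $(\Rightarrow)$ direction of item (2): passing from qualitative cone invariance to an infinitesimal inequality $\tilde\J \geq \delta \J$ valid on the \emph{entire} fiber $E_x$ requires the null-cone sign analysis combined with compactness of the unit sphere in $E_x$, so that the bounds coming from $C_+$ and from $C_-$ are attained and fit together into a function $\delta(x)$ with the regularity needed for the subsequent items.
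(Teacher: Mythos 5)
This paper does not actually prove Proposition~\ref{pr:J-separated-tildeJ}; it is quoted from \cite{arsal2012a}, so your argument has to stand on its own. Items (1) and the ($\Leftarrow$) half of (2) are fine, and your skeleton for ($\Rightarrow$) is the right one: pick $\delta(x)$ between $\sup_{w\in C_-(x)}\tilde\J(w)/\J(w)$ and $\inf_{v\in C_+(x)}\tilde\J(v)/\J(v)$, with $\tilde\J\ge0$ on $C_0$ coming from $C_0\subset\overline{C_+}$ and invariance of $\overline{C_+}$. The gap is the compatibility of the two bounds: ``homogeneity plus a limit argument approaching $C_0$ from either side'' does not prove $\sup_{C_-}\tilde\J/\J\le\inf_{C_+}\tilde\J/\J$. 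These are global extrema over the open cones and need not be approached near $C_0$; near the null cone the ratio tends to $+\infty$ on the positive side and $-\infty$ on the negative side whenever $\tilde\J>0$ there, and exactly in the delicate case where $\tilde\J$ vanishes at a null direction the limits are finite and your argument says nothing. What is needed is a Finsler/S-lemma step: if $v\in C_+$, $w\in C_-$ violated the inequality, take $\lambda$ strictly between the two ratios, restrict $Q=\tilde\J-\lambda\J$ to the plane spanned by $v,w$ (where $\J$ has signature $(1,1)$), and either run the discriminant computation or use convexity of a component of $\{Q<0\}$ to produce a nonzero null vector $u$ with $\tilde\J(u)=Q(u)<0$, contradicting the sign on $C_0$; alternatively use Dines' convexity of the joint range of $(\J,\tilde\J)$ plus separation. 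Without such a step the central implication of (2) is not established. You also leave the regularity of $x\mapsto\delta(x)$ (needed to integrate $\delta$ along orbits in (2) and (4)) unaddressed, and note that your bracket is the transpose of the one printed in item (5); yours is the derivable one, so this needs reconciling rather than the remark that (5) ``simply records'' it.

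Second, your one-line treatment of item (4) is incorrect: from $\tilde\J\ge\delta\J$ and $\J(A_t(x)v)<0$ the same Gr\"onwall computation gives $\partial_t\log|\J(A_t(x)v)|=\partial_t\J/\J\le\delta(X_t(x))$, i.e.\ the \emph{reversed} estimate $|\J(A_{t_2}(x)v)|\le|\J(A_{t_1}(x)v)|\exp\Delta_{t_1}^{t_2}(x)$; the sign flip on $C_-$ does not ``upgrade to absolute values''. Concretely, for $A_t=\diag(e^{-t},e^{t})$ with $\J(x,y)=y^2-x^2$ one has $\tilde\J=2\|\cdot\|^2$ and every admissible $\delta$ lies in $[-2,2]$, while a $\J$-negative vector close to the null cone has $|\J(A_tv)|\to0$ in finite time, so the claimed lower bound fails even though $\J(A_tv)\neq0$ on the interval considered. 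The exponential lower bound is available only for vectors with $\J>0$ along the orbit segment (which is how it is used later in this paper), so your derivation of (4), and the extension of the ``in particular'' inequality of (2) to all of $E_x$, must be restricted or argued differently. Your item (3) converse is also only gestured at: strict cone invariance gives merely the non-strict infinitesimal inequality for the original $\J$, and adding a small positive definite form neither preserves the cones and index automatically nor obviously produces strictness; the standard constructions go through the flow (e.g.\ pulling $\J$ back by a time-$\tau$ map, or building $\J_0$ from the dominated splitting in an adapted metric). The first two points are the substantive gaps.
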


\begin{remark}
  \label{rmk:strictly-J-separated}
  We stress that the necessary and sufficient condition in
  items (2-3) of Proposition~\ref{pr:J-separated-tildeJ},
  for (strict) $\J$-separation, shows that a cocycle
  $A_t(x)$ is (strictly) $\J$-separated if, and only if, its
  inverse $A_{-t}(x)$ is (strictly) $(-\J)$-separated.
\end{remark}

\begin{remark}
  \label{rmk:Jexp-ineq}
  Item (2) above of Proposition~\ref{pr:J-separated-tildeJ}
  shows that $\delta$ is a measure of the ``minimal
  instantaneous expansion rate'' of $|\J\circ A_t(x)|$.
\end{remark}

\begin{proposition}\cite[Theorem 2.23]{arsal2012a}
  \label{pr:char-dom-split}
  Let $\Gamma$ be a compact invariant set for $X_t$
  admitting a dominated splitting $E_\Gamma= F_-\oplus F_+$
  for $A_t(x)$, a linear multiplicative cocycle over
  $\Gamma$ with values in $E$. Let $\J$ be a $C^1$ family
    of indefinite quadratic forms such that $A_t(x)$ is
    strictly $\J$-separated. Then
  \begin{enumerate}
  \item $F_-\oplus F_+$ is partially hyperbolic with 
    $F_+$ uniformly expanding
    if 
    $\Delta_0^t(x)\xrightarrow[t\to+\infty]{} +\infty$
    for all $x\in\Gamma$.
  \item $F_-\oplus F_+$ is partially hyperbolic with $F_-$
    uniformly contracting 
    if 
    $\Delta_0^t(x)\xrightarrow[t\to+\infty]{}-\infty$
    for all $x\in\Gamma$.
  \item $F_-\oplus F_+$ is uniformly hyperbolic
    if, and only if, there exists a compatible family $\J_0$
    of quadratic forms in a neighborhood of $\Gamma$ such
    that $\J_0'(v)>0$ for all $v\in E_x$ and all $x\in\Gamma$.
  \end{enumerate}
\end{proposition}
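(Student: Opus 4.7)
The plan is to translate integrated lower bounds on $\delta$ into exponential norm estimates on the invariant subbundles $F_\pm$ via item~(4) of Proposition~\ref{pr:J-separated-tildeJ}, and to upgrade pointwise divergence of $\Delta_0^t$ to a uniform linear bound using compactness of the space of invariant measures together with the domination gap. Because $A_t$ is strictly $\J$-separated and $F_-\oplus F_+$ is a dominated splitting, Theorem~\ref{theo2012} and its cone reformulation identify $F_+$ with a $\J$-positive subbundle and $F_-$ with a $\J$-negative one; continuity of $\J$ and compactness of $\Gamma$ then supply constants $0<c_1\le c_2$ such that $c_1\|v\|^2\le |\J(v)|\le c_2\|v\|^2$ for every $v\in F_+(x)\cup F_-(x)$ with $x\in\Gamma$. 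This two-sided bound is the device that converts $\J$-growth into norm growth.

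For part~(1), fix $v\in F_+(x)\setminus\{0\}$. Since the orbit $A_t(x)v$ stays in the open $\J$-positive cone, item~(4) of Proposition~\ref{pr:J-separated-tildeJ} yields $|\J(A_t(x)v)|\ge |\J(v)|\exp\Delta_0^t(x)$, hence $\|A_t(x)v\|^2\ge (c_1/c_2)\|v\|^2\exp\Delta_0^t(x)$. The remaining step, which I regard as the main obstacle, is to upgrade the pointwise hypothesis $\Delta_0^t(x)\to+\infty$ to a uniform linear lower bound $\Delta_0^t(x)\ge\lambda t-C$ on $\Gamma$. I would handle it as follows: $\delta\in C(\Gamma)$ and $\Delta_0^t$ is an additive cocycle over $X_t$, so by Birkhoff's theorem every $X_t$-invariant Borel probability $\mu$ on $\Gamma$ satisfies $\int\delta\,d\mu\ge 0$. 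Strict $\J$-separation combined with the dominated-splitting gap excludes the existence of an invariant $\mu$ with vanishing integral, for otherwise Oseledets would produce a neutral Lyapunov direction inside $F_+$ incompatible with the domination between $F_-$ and $F_+$. Hence $\inf_\mu\int\delta\,d\mu>0$ on the weak-$*$ compact space of invariant probabilities, and a Ma\~n\'e-type uniform ergodic argument converts this into $\Delta_0^t(x)\ge\lambda' t-C$ uniformly on $\Gamma$, giving uniform expansion of $F_+$.

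Part~(2) is obtained by time reversal. By Remark~\ref{rmk:strictly-J-separated} the cocycle $A_{-t}$ is strictly $(-\J)$-separated, and the formula for $\tilde J_x$ in~\eqref{eq:J-separated-tildeJ} shows that its associated $\delta$-function equals $-\delta$, so that the hypothesis $\Delta_0^t(x)\to-\infty$ becomes the ``tends to $+\infty$'' hypothesis of part~(1) for the reversed system; applying part~(1) to $(A_{-t},-\J)$ yields uniform expansion of $F_-$ under $A_{-t}$, equivalently uniform contraction of $F_-$ under $A_t$. For the $(\Leftarrow)$ direction of part~(3), compactness of $\Gamma$ and $\tilde\J_0(v)>0$ on nonzero $v\in E_x$ produce a uniform operator bound $\tilde J_{0,x}\ge\mu_0\,I$; together with $|\J_0(v)|\le c_2\|v\|^2$, item~(5) of Proposition~\ref{pr:J-separated-tildeJ} forces the associated $\delta_0$ to satisfy $\delta_0\ge\mu_0/c_2>0$ uniformly on $\Gamma$, whence $\Delta_0^t(x)\ge ct$, and parts~(1)--(2) applied to $\J_0$ yield simultaneous uniform expansion on $F_+$ and contraction on $F_-$, i.e.\ uniform hyperbolicity. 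Conversely, given uniform hyperbolicity, a Gourmelon-type adapted norm $\|\cdot\|$ for the splitting furnishes $\J_0(v)=\|v^+\|^2-\|v^-\|^2$ in splitting coordinates $v=v^++v^-$: this quadratic form is compatible with $F_-\oplus F_+$, and direct differentiation gives $\tilde\J_0(v)\ge 2\lambda(\|v^+\|^2+\|v^-\|^2)>0$ on nonzero vectors.

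The principal obstacle, as flagged above, is the uniformisation step in part~(1): passing from the pointwise divergence of $\Delta_0^t$ to a uniform exponential rate demands combining Birkhoff's theorem, the weak-$*$ compactness of invariant probabilities, and crucially the domination gap to rule out zero Lyapunov exponents inside $F_+$. Without this argument one obtains only pointwise, possibly subexponential, expansion of $F_+$, which falls short of partial hyperbolicity. Once the uniform linear bound on $\Delta_0^t$ is in place, the remaining content of all three parts is a direct combination of items~(4)--(5) of Proposition~\ref{pr:J-separated-tildeJ} with the $\J$-to-norm equivalence on $F_\pm$ supplied by Theorem~\ref{theo2012}.
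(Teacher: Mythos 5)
The paper itself does not prove Proposition~\ref{pr:char-dom-split}: it is quoted from \cite{arsal2012a} (Theorem 2.23), so your attempt has to be measured against the argument there, which is exactly the cone/$\J$-growth scheme you set up — but carried out without any measure theory. The genuine gap in your write-up is the step you yourself flag as the main obstacle: the exclusion of invariant measures with $\int\delta\,d\mu=0$. Your justification (``Oseledets would produce a neutral Lyapunov direction inside $F_+$ incompatible with the domination'') does not work, for two reasons. First, by item (2) of Proposition~\ref{pr:J-separated-tildeJ}, $\delta$ is only a \emph{lower} bound for $\partial_t\log|\J(A_t(x)v)|$ on the positive cone, so a vanishing average of $\delta$ gives no upper bound on growth along $F_+$ and hence does not produce a zero Lyapunov exponent there. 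Second, even if some exponent inside $F_+$ were zero, this is perfectly compatible with a dominated splitting: domination constrains only the gap between $F_-$ and $F_+$, not the signs of the exponents (partially hyperbolic sets with neutral centre are dominated). So the inference $\inf_\mu\int\delta\,d\mu>0$ is unsupported as written. The conclusion you need is nevertheless true and can be rescued in two ways: (i) via Atkinson's recurrence theorem (for an ergodic invariant $\mu$ with $\int\delta\,d\mu=0$ the additive cocycle $\Delta_0^t$ returns near $0$ for $\mu$-a.e.\ $x$, contradicting $\Delta_0^t(x)\to+\infty$ everywhere), followed by your Ma\~n\'e-type compactness step; or (ii), simpler and in the spirit of \cite{arsal2012a}, a direct covering argument that bypasses invariant measures altogether: for each $x$ choose $t_x$ with $\Delta_0^{t_x}(x)\ge 2$, shrink to a neighbourhood on which $\Delta_0^{t_x}\ge 1$, extract a finite subcover with $T^*=\max_i t_{x_i}$, and concatenate these time blocks along any orbit to get $\Delta_0^t(x)\ge t/T^*-C$ uniformly on $\Gamma$; the uniform expansion of $F_+$ then follows from item (4) and your two-sided comparison of $|\J|$ with $\|\cdot\|^2$ on the subbundles.

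A smaller point concerns part (2): time reversal changes the base point, so the hypothesis for the reversed cocycle $A_{-t}$ with form $-\J$ is the divergence of $-\Delta_{-t}^{0}(x)=-\Delta_0^t(X_{-t}(x))$, which is not literally the assumed $-\Delta_0^t(x)\to+\infty$; this identification is legitimate only after the uniform linear bound above is in place (or one argues directly on $F_-$, where for $\J$-negative vectors the differential inequality reverses and yields $|\J(A_t(x)v)|\le|\J(v)|\exp\Delta_0^t(x)$, giving contraction at once). Apart from these two points, your outline — comparability of $|\J|$ and the norm on $F_\pm$, item (4) of Proposition~\ref{pr:J-separated-tildeJ} for the growth estimate, the compactness argument for the reverse implication in (3), and the adapted-norm quadratic form $\J_0(v)=\|v^+\|^2-\|v^-\|^2$ for the direct implication — is sound and coincides with the strategy of the cited proof.
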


For the proof and more details about the Proposition \ref{pr:char-dom-split}, see \cite{arsal2012a}.

Above we write $\tilde\J(v)= <\tilde J_x v, v>$, where $\tilde J_x$ is given in Proposition~\ref{pr:J-separated}, that is, $\tilde\J(v)$ is the time
derivative of $\J$ under the action of the flow.

We use Proposition~\ref{pr:char-dom-split} to obtain sufficient conditions for a flow $X_t$ on $m$-manifold to have a $\wedge^{m-1} DX_t$-invariant
one-dimensional uniformly expanding direction orthogonal to the $(m-1)$-dimensional center-unstable bundle.


First, we present  some properties about exterior products and the main lemma to prove the theorem \ref{mthm:sec-exp-md}.

Let $V$ a $m$-dimensional vector  space, we denote $V$ by $V^{m}$,  consider $\wedge^{k} V^{m}$ where $2 \leq k \leq m$. Let
$\mathcal{B}=\{e_{1},...,e_{m}\}$ a basis of $V^{m}$. So $\{e_{j_{1}}\wedge ... \wedge e_{j_{k}}: 1 \leq j_{1} < ... < j_{k} \leq m \}$ is a basis of
$\wedge^{k} V^{m}$, and $J := \{ (j_{1},..., j_{k}) \in \mathbb{N}^{k}: 1 \leq j_{1} < ... < j_{k} \leq m \}$. Let $l = \begin{pmatrix} m \\ k \\
\end{pmatrix}$, so we have $l$ combination of $k$ vectors in $\{e_{1},...,e_{m}\}$, and  $|J| = l$.


Take $u_{1},u_{2},...,u_{k} \in V^{m}$ where $u_{j} = (u^{1}_{j},u^{2}_{j},...,u^{m}_{j})_{\mathcal{B}}$ for all $j \in \{1,...,k\}$. Define

\begin{equation} \mathcal{C} := \begin{pmatrix} 	u_{1}^{1} & ... & u_{k}^{1}   \\ ... & ... & ...  \\ u_{1}^{m} & ... & u_{k}^{m}    \\
\end{pmatrix}_{m \times k}. \end{equation}

For $(j_{1},...,j_{k}) \in J$, consider

\begin{equation}\label{submatrix}
\mathcal{C}^{j_{1},...,j_{k}}:= \begin{pmatrix} 	
u_{1}^{j_{1}} & ... & u_{k}^{j_{1}}   \\ ... & ... & ...  \\
u_{1}^{j_{k}} & ... & u_{k}^{j_{k}}    \\
\end{pmatrix}_{k \times k}
\end{equation}

The following result holds

\begin{equation}\label{eq1}
 u_{1} \wedge ...\wedge u_{k}= \sum_{(j_{1},..., j_{k}) \in J} \det (\mathcal{C}^{j_{1},...,j_{k}})
 (e_{j_{1}}\wedge ... \wedge e_{j_{k}}).
\end{equation}

Let $A: V^{m} \to V^{m}$ a linear operator with matrix in basis $\mathcal{B}$ given by

\begin{equation}
\begin{pmatrix} 	
a_{11} & a_{12} & ... & a_{1m}  \\ ... & ... & ... & ... \\ a_{m1} & a_{m2} & ... & a_{mm}  \\
\end{pmatrix}_{(m\times m)}.
\end{equation}

We will denote this matrix by $A$ too.


Consider $\wedge^{k} A: \wedge^{k}  V^{m}  \to \wedge^{k}  V^{m}$, note that $A(u_{1}) \wedge ... \wedge A(u_{k}) = \wedge^{k} A (u_{1} \wedge ...\wedge u_{k})$, by (\ref{eq1}) and the linearity of  $\wedge^{k} A$, we have that

\begin{equation}
A(u_{1}) \wedge ... \wedge A(u_{k}) = \sum_{(j_{1},..., j_{k}) \in J} \det (\mathcal{C}^{j_{1},...,j_{k}})\wedge^{k} A (e_{j_{1}}\wedge ... \wedge e_{j_{k}})
\end{equation}

Define $A_{j}:=A(e_{j})$, so $A_{j}$ is the $j$-th column of $A$, i.e., $A(e_{j})=A_{j} = (a_{1j},...,a_{mj})^{T}$, so $A(e_{j}) = [a_{ij}]_{m \times
1}$. Let $A_{j_{1} ...j_{k}}:=(A_{j_{1}}$ $...$ $A_{j_{k}})_{m\times k} $ where $(j_{1},...,j_{k}) \in J$.
For each $(i_{1}...i_{k}),(j_{1}...j_{k}) \in J$ consider

\begin{equation}
A^{i_{1}...i_{k}}_{j_{1}...j_{k}}:=
\begin{pmatrix} 	
a_{i_{1}j_{1}} & ... & a_{i_{1}j_{k}}   \\ ... & ... & ...  \\ a_{i_{k}j_{1}} &
... & a_{i_{k}j_{k}}   \\
\end{pmatrix}_{k \times k}
\end{equation}

Using that $\wedge^{k} A (e_{j_{1}}\wedge ... \wedge e_{j_{k}}) = A(e_{j_{1}})\wedge ...\wedge A(e_{j_{k}})$ with matrix
$$
A_{j_{1} ...j_{k}}:=(A_{j_{1}}...A_{j_{k}})_{m\times k},
$$
by (\ref{eq1}) we obtain that

\begin{equation}
A(e_{j_{1}})\wedge ... \wedge A(e_{j_{k}}) = \sum_{(i_{1},..., i_{k}) \in J} \det (A^{i_{1}...i_{k}}_{j_{1}...j_{k}}) (e_{i_{1}}\wedge ... \wedge e_{i_{k}}).
\end{equation}

\begin{lemma} \label{lemmatheoA}
Let $V$ a $m$-dimensional manifold and $A:T_xV\to T_yV$ a linear operator  then $\wedge^{(m-1)} A=\det(A)\cdot
(A^{-1})^*$.
\end{lemma}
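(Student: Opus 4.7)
The plan is to verify the identity by a direct matrix computation in an orthonormal basis. Fix orthonormal bases $\{e_1,\dots,e_m\}$ of $T_xV$ and $\{f_1,\dots,f_m\}$ of $T_yV$, and let $(a_{ij})$ be the matrix of $A$ in these bases. For each index $i$ set
\[
  \hat e_i := (-1)^{i-1}\,e_1\wedge\cdots\wedge\widehat{e_i}\wedge\cdots\wedge e_m,
\]
where $\widehat{e_i}$ means that $e_i$ is omitted, and define $\hat f_i\in\wedge^{m-1}T_yV$ analogously. By the inner product on exterior powers recalled in item (ii) of the Remark, the families $\{\hat e_i\}$ and $\{\hat f_i\}$ are orthonormal bases of the corresponding $(m-1)$-th exterior powers. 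The normalization sign $(-1)^{i-1}$ is chosen so that $e_i\wedge\hat e_i=e_1\wedge\cdots\wedge e_m$.

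First, I would compute the matrix entries of $\wedge^{m-1}A$ in these bases. Applying equation~(\ref{eq1}) to the vectors $u_k=Ae_k$ with $k\neq j$, and using the determinantal formula for the inner product on $\wedge^{m-1}$-spaces, one obtains
\[
  \bigl\langle\wedge^{m-1}A(\hat e_j),\,\hat f_i\bigr\rangle
  =(-1)^{i+j}\,\det M_{ij}(A),
\]
where $M_{ij}(A)$ denotes the $(m-1)\times(m-1)$ minor obtained from $(a_{ij})$ by deleting row $i$ and column $j$. The right-hand side is exactly the classical $(i,j)$-cofactor $C_{ij}$ of $A$, so the matrix of $\wedge^{m-1}A$ in the bases $\{\hat e_j\}$, $\{\hat f_i\}$ is the cofactor matrix $(C_{ij})$.

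Finally, I would invoke the classical adjugate identity $\operatorname{adj}(A)\cdot A=\det(A)\cdot I$, which for invertible $A$ gives $\operatorname{adj}(A)=\det(A)\,A^{-1}$. Since $\operatorname{adj}(A)=(C_{ij})^{T}$ by definition, transposing yields $(C_{ij})=\det(A)\,(A^{-1})^{T}$. In orthonormal bases the adjoint $*$ coincides with the transpose, so $(A^{-1})^{T}=(A^{-1})^{*}$, whence $\wedge^{m-1}A=\det(A)\,(A^{-1})^{*}$ as claimed; the non-invertible case is recovered by continuity, both sides being polynomial in the entries of $A$ once the right-hand side is read as $\operatorname{adj}(A)^{*}$. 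The main obstacle is the sign bookkeeping: producing the correct factor $(-1)^{i+j}$ in the cofactor requires combining the two normalizing signs $(-1)^{i-1}$ and $(-1)^{j-1}$ in the definitions of $\hat e_j$ and $\hat f_i$ with the transposition count arising when the wedge $\bigwedge_{k\neq j}Ae_k$ is expanded in the basis $\{\hat f_i\}$ via~(\ref{eq1}).
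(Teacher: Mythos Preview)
Your proof is correct and follows essentially the same route as the paper: both identify $\wedge^{m-1}T_xV$ with $T_xV$ via the Hodge-type correspondence $e_i\leftrightarrow(-1)^{i-1}e_1\wedge\cdots\wedge\widehat{e_i}\wedge\cdots\wedge e_m$ (the paper writes this as $\delta_p e_p$ with $\delta_p=(-1)^{p+1}$), then compute that under this identification $\wedge^{m-1}A$ becomes the cofactor matrix, and finally invoke the adjugate identity $\det(A)\,A^{-1}=\operatorname{adj}(A)=\operatorname{cof}(A)^{T}$. Your presentation is somewhat cleaner---you work with orthonormal bases and inner-product matrix entries throughout, and you address the non-invertible case by continuity, whereas the paper computes image vectors directly and only writes out the odd-$p$ case---but the underlying argument is the same.
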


\begin{proof}

Consider $k=m-1$.

We use the following identification between $\wedge^{(m-1)}T_{x}M$ and $T_{x}M$. For each $(j_{1},...,j_{m-1}) \in J$, we identify $e_{j_{1}} \wedge ... \wedge e_{j_{k}}$ in $\wedge^{(m-1)}T_{x}M$ by $\delta_{p}e_{p}$ in $T_{x}M$, where $p \notin \{j_{1},...,j_{m-1}\}$, $\delta_{p} = 1$ if $p$ is odd, and $\delta_{p} = -1$ if $p$ is even.

We must show that for each  $(j_{1},...,j_{m-1}) \in J$ the exterior product $\wedge^{(m-1)} A (e_{j_{1}} \wedge ... \wedge e_{j_{k}})$ corresponds to the $\det(A)\cdot (A^{-1})^* (\delta_{p}e_{p})$, where $\delta_{p}e_{p}$ is given as above.

Define $S:= \det (A)\cdot (A^{-1})^*$, using that $A^{-1}=\frac{1}{\det(A)} \adj (A)$, we obtain that $S= \cof(A)$ where $\cof(A) = [(-1)^{i+j}M_{ij}]_{m \times m}$ and $M_{ij}$ is the determinant of the submatrix formed by deleting the $i$-th row and $j$-th column. We have that
$M_{ij} = \det (A^{r_{1}...r_{k}}_{s_{1}...s_{k}})$ where $i \notin {r_{1},...,r_{k}}$ and $j \notin {s_{1},...,s_{k}}$.

Note that
$$
\cof(A) (\delta_{p}e_{p})=\delta_{p} \cof(A)(e_{p}) = \delta_{p} ((-1)^{1+p}M_{1p},(-1)^{2+p}M_{2p},...,(-1)^{m+p}M_{mp})_{\mathcal{B}}.
$$

In case $p$ is odd, $\delta_{p} = 1$ and $\cof(A) (\delta_{p}e_{p}) = (M_{1p},-M_{2p},...,(-1)^{m+p}M_{mp})_{\mathcal{B}}$.

We obtain that
\begin{align*}
\cof(A) (\delta_{p}e_{p}) = M_{1p}e_{1}+M_{2p}(-e_{2})+...+M_{mp}(-1)^{m+p} =\\
M_{1p}(e_{1}\delta_{1})+M_{2p}(e_{2}\delta_{2})+...+M_{mp}(e_{mp}\delta_{mp}).
  \end{align*}

Using that
$$
A(e_{j_{1}})\wedge ... \wedge A(e_{j_{k}}) = \sum_{(i_{1},..., i_{k}) \in J} \det (A^{i_{1}...i_{k}}_{j_{1}...j_{k}}) (e_{i_{1}}\wedge ... \wedge e_{i_{k}})
$$
and
$M_{ij} = \det (A^{r_{1}...r_{k}}_{s_{1}...s_{k}})$  where $i \notin {r_{1},...,r_{k}}$  and $j \notin {s_{1},...,s_{k}}$, we have that $\cof(A) (\delta_{p}e_{p}) \cong A(e_{j_{1}})\wedge ... \wedge A(e_{j_{k}})$. 

This concludes the proof.
\end{proof}

We generalize the corollary \ref{corA} to arbitrary $n$ and $k$. 

We just need to prove the following result:

\begin{lemma}\label{bivectparthyp1}
The subbundle $F_{\Gamma}$ is volume expanding by $DX_t$ if, and only if,  $\widetilde F$ is uniformly expanded by $\wedge^{k} DX_t$.

In particular,  $E \oplus F$  is a singular hyperbolic splitting, where $F$ is volume expanding for $DX_{t}$ if, and only if, $ \widetilde E \oplus \widetilde F$ is partially hyperbolic splitting for $\wedge^{k} DX_t$ such that $\widetilde F$ is uniformly expanded by $\wedge^{k} DX_t$.
\end{lemma}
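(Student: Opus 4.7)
The plan is to reduce everything to the core observation that $\widetilde F = \wedge^{k} F$ has rank one and to combine this with the already-established equivalence of dominated splittings in Theorem~\ref{bivectparthyp2}.

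First I would verify the main equivalence. Since $k = \dim F$, the fibre $\widetilde F_x = \wedge^{k} F_x$ is one-dimensional. Choosing a continuous local orthonormal frame $\{f_1(x),\ldots,f_k(x)\}$ of $F$, the $k$-vector $\omega(x) := f_1(x) \wedge \cdots \wedge f_k(x)$ is a unit generator of $\widetilde F_x$. Item (iv) of the preliminary Remark identifies the norm of a wedge with the volume of the parallelepiped spanned by its factors, so
\[
\|(\wedge^{k} DX_t)\,\omega(x)\| = \vol(DX_t f_1,\ldots,DX_t f_k) = |\det(DX_t|_{F_x})|.
\]
Since $\widetilde F_x$ is one-dimensional, $\wedge^{k} DX_t|_{\widetilde F_x}$ is just multiplication by a scalar of modulus $|\det(DX_t|_{F_x})|$; hence $\widetilde F$ is uniformly expanded by $\wedge^{k} DX_t$ if and only if there exist $C,\lambda>0$ with $|\det(DX_t|_{F_x})| \ge C e^{\lambda t}$ for all $t > 0$ and $x \in \Gamma$, which is precisely the volume-expanding condition on $F$.

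For the ``in particular'' clause I would combine this with Theorem~\ref{bivectparthyp2}. Forward direction: a singular-hyperbolic splitting $E \oplus F$ is dominated with $F$ volume expanding; Theorem~\ref{bivectparthyp2} transports domination to $\widetilde E \oplus \widetilde F$, and the first step converts volume expansion of $F$ into uniform expansion of $\widetilde F$. Domination together with uniform expansion of the fast subbundle $\widetilde F$ is precisely the alternative form of the partial-hyperbolicity definition recalled in Section~\ref{sec:stat}, so $\widetilde E \oplus \widetilde F$ is partially hyperbolic with $\widetilde F$ uniformly expanded. The converse runs the same two steps in reverse, recovering domination of $E \oplus F$ from Theorem~\ref{bivectparthyp2} and volume expansion of $F$ from the first step.

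The main obstacle I would expect to verify carefully is that, in the converse direction, the pair ``$E \oplus F$ dominated with $F$ volume expanding'' genuinely qualifies as singular hyperbolic in the sense of Definition~\ref{singhypset}, i.e.\ that $E$ is actually uniformly contracted. In the codimension-one setting emphasised by the paper one has $\dim E = 1$ and $\widetilde E = E \wedge \wedge^{k-1} F$; applying the domination inequality from $\widetilde E \oplus \widetilde F$ to a decomposable unit vector $e \wedge f_{j_1} \wedge \cdots \wedge f_{j_{k-1}}$ and combining it with the identification of $\|\wedge^{k} DX_t|_{\widetilde F}\|$ with $|\det DX_t|_F|$ should allow one to extract a bound $\|DX_t|_E\| \le C e^{-\lambda' t}$ on the one-dimensional $E$ and close the loop.
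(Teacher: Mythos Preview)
Your argument for the main equivalence is correct and essentially identical to the paper's: both reduce to the observation that $\widetilde F=\wedge^{k}F$ is one-dimensional, choose a frame $\{f_1,\ldots,f_k\}$ for $F$, and use the identification $\|\wedge^{k}DX_t(f_1\wedge\cdots\wedge f_k)\|=|\det(DX_t|_{F})|$ to pass back and forth between volume expansion of $F$ and uniform expansion of $\widetilde F$.

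For the ``in particular'' clause you are in fact more careful than the paper. The paper's proof of the lemma stops after the first equivalence and does not address at all the question you raise, namely how to recover uniform contraction of $E$ in the converse direction. Instead the paper handles this by restating the result immediately afterwards as Corollary~\ref{bivectparthyp}, where the hypothesis ``$E$ is uniformly contracted by $DX_t$'' is simply \emph{added} rather than derived. So your instinct that something needs to be checked here is correct; the paper resolves it by assumption, not by the kind of argument you sketch. Your proposed route---pairing the domination inequality on $\widetilde E\oplus\widetilde F$ with the identification $\|\wedge^{k}DX_t|_{\widetilde F}\|=|\det DX_t|_F|$---is plausible in the codimension-one case but would need more work to make rigorous, since domination alone only controls the ratio and the expansion of $\widetilde F$ pulls in the wrong direction when you try to isolate $\|DX_t|_E\|$.
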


\begin{proof}

We consider the action of the cocycle $DX_t(x)$ on $k$-vector  that is the $k$-exterior $\wedge^kDX_t$ of the cocycle acting on $\wedge^kT_\Gamma M$.

Denote by $\|\cdot\|$ the standard norm on $k$-vectors induced by the Riemannian norm of $M$; see, e.g. \cite{arnold-l-1998}. We write $m=\dim M$.

Suppose that $T_\Gamma M$ admits a splitting $E_\Gamma\oplus F_\Gamma$ with $\dim E_\Gamma=m-k$
  and $\dim F_\Gamma=k$.
 		
We note that if $E\oplus F$ is a $DX_t$-invariant splitting of $T_\Gamma M$, with $\{e_1,\dots,e_l\}$ a family of basis for $E$ and $\{f_1,\dots,f_k\}$
a family of basis for $F$, then $\widetilde F=\wedge^kF$ generated by $\{f_{i_1}\wedge\dots\wedge f_{i_k}\}_{1\le i_1<\dots<i_k\le k}$ is naturally
$\wedge^kDX_t$-invariant by construction. Then, the dimension of $\widetilde F$ is one with basis given by the vector $f_1\wedge ... \wedge f_k$.

Assume that $F_{\Gamma}$ is volume expanding by $DX_t$.  We must show that there exist $C$ and $\lambda > 0$ such that $|\wedge^{k}DX_{t}|_{P}| \geq
Ce^{\lambda t}$, for all $t>0$, where $P$ is spanned by $f_{1} \wedge ... \wedge f_{k}$.

Note that
$$
||\wedge^{k}DX_{t}|_{P}|| =||\wedge^{k}DX_{t}(f_{1} \wedge ... \wedge f_{k})||  =||DX_{t}(f_{1}) \wedge ... \wedge DX_{t}(f_{k})||.
$$

But $f_{1},...,f_{k}$ is a basis for $F$, by hypothesis there exist constants $C$ and $\lambda > 0$ such that $|\det(DX_{t}|_{F})| \geq C.e^{\lambda t}$
for all $t>0$. So,

$|\det(DX_{t}|_{F})| = \vol(DX_{t}(f_{1}),...,DX_{t}(f_{k})) =||DX_{t}(f_{1})\wedge ... \wedge DX_{t}(f_{k})||$.

The reciprocal statement is straightforward.

Given a basis $\{f_{1},...,f_{k}\}$ of $F$, we have that
  \begin{align*}
  |\det (DX_{t}|_{F})|= \\ \vol(DX_{t}(f_{1}),...,DX_{t}(f_{k})) = ||DX_{t}(f_{1})\wedge ...\wedge
  DX_{t}(f_{k})|| =\\ ||\wedge^{k}DX_{t}(f_{1}\wedge ...\wedge f_{k})||=||\wedge^{k}DX_{t}|_{P}||
    \end{align*}
    where $P$ is spanned by $f_{1} \wedge ... \wedge f_{k}$.
	
However, by hypothesis, there exist $C$ and $\lambda > 0$ such that $||\wedge^{k}DX_{t}|_{P}|| \geq Ce^{\lambda t}$, for all $t>0$. 
\end{proof}

\begin{corollary}\label{bivectparthyp} Assume that $E$ is uniformly contracted by $DX_{t}$. 	$E \oplus F$ is a singular-hyperbolic splitting for
$DX_{t}$
  if, and only if,  $ \widetilde E \oplus \widetilde F$ is partially hyperbolic splitting for $\wedge^{k}
  DX_t$ such that $\widetilde F$ is uniformly expanded by $\wedge^{k} DX_t$.
\end{corollary}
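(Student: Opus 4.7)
The plan is to bolt together the two results just established: Theorem~\ref{bivectparthyp2}, which transports a dominated splitting between $DX_t$ on $T_\Gamma M$ and $\wedge^k DX_t$ on $\wedge^k T_\Gamma M$, and Lemma~\ref{bivectparthyp1}, which identifies volume expansion of $F$ with uniform expansion of the one-dimensional factor $\widetilde F=\wedge^k F$. The standing hypothesis that $E$ is uniformly contracted by $DX_t$ supplies the stable half of the singular-hyperbolic data on the $DX_t$ side and is used verbatim in both directions of the equivalence.

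For the forward implication, I would unpack singular hyperbolicity of $E\oplus F$ into three clauses---domination, uniform contraction of $E$, and volume expansion of $F$---and transfer the first and third separately: Theorem~\ref{bivectparthyp2} yields domination of $\widetilde E\oplus \widetilde F$ for $\wedge^k DX_t$, while Lemma~\ref{bivectparthyp1} yields uniform expansion of $\widetilde F$. Together these are exactly the partially hyperbolic splitting with $\widetilde F$ uniformly expanded, since the paper's definition of partial hyperbolicity admits either uniform contraction of the stable factor or uniform expansion of the central factor. For the converse, assume $\widetilde E\oplus \widetilde F$ is partially hyperbolic with $\widetilde F$ uniformly expanded; partial hyperbolicity entails domination, so Theorem~\ref{bivectparthyp2} in reverse returns a dominated splitting $E\oplus F$ for $DX_t$, and Lemma~\ref{bivectparthyp1} in reverse converts uniform expansion of $\widetilde F$ back to volume expansion of $F$. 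Together with the standing uniform contraction of $E$, these assemble singular hyperbolicity of $E\oplus F$.

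There is no real obstacle: both directions are essentially mechanical once Theorem~\ref{bivectparthyp2} and Lemma~\ref{bivectparthyp1} are in hand. The only bookkeeping point is the flavor of partial hyperbolicity used on the $\wedge^k$ side---we pick uniform expansion of $\widetilde F$ rather than uniform contraction of $\widetilde E$---and the harmless fact that $\widetilde F$ is one-dimensional, so that volume expansion and uniform expansion coincide there. This is precisely what makes Lemma~\ref{bivectparthyp1} slot into the partial hyperbolicity definition without any additional estimate on $\widetilde E$ being required or claimed.
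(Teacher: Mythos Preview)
Your proposal is correct and follows precisely the route the paper intends: the corollary is stated without a separate proof because it is immediate from Theorem~\ref{bivectparthyp2} (transfer of domination) and Lemma~\ref{bivectparthyp1} (equivalence of volume expansion of $F$ with uniform expansion of $\widetilde F$), with the standing hypothesis on $E$ supplying the remaining clause. Your explicit unpacking of both implications is exactly what the paper leaves implicit.
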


Let $M$ Riemannian manifold $m$-dimensional with $\langle \cdot,\cdot \rangle$ inner product in $T_{\Gamma}M$, and $\langle \cdot,\cdot \rangle_{*}$ the
inner product in $\wedge^{k} T_{\Gamma}M$ induced by $\langle \cdot,\cdot \rangle$ where $\wedge^{k} T_{\Gamma}M = \bigcup_{x \in \Gamma} \wedge^{k}
T_{x}M$. So for $x \in \Gamma$, we have that $\langle \cdot,\cdot \rangle$ is acting on $T_{x}M$,  and  $\langle \cdot,\cdot \rangle_{*}$ is acting on
$\wedge^{k} T_{x}M$. 	

\begin{lemma} \label{lemma1} 
Let $M$ be a riemannian $m$-dimensional manifold. Then, for all $[\cdot,\cdot]_{*}$ inner product in  $\wedge^{(m-1)} T_{\Gamma}M$ there exists a inner product $[\cdot,\cdot]$ on $T_{\Gamma}M$ such that $[\cdot,\cdot]_{*}$ is induced by  $[\cdot,\cdot]$. 
\end{lemma}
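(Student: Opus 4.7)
The plan is to produce $[\cdot,\cdot]$ pointwise on each fiber $T_x M$ and then globalize using continuity of the construction. The starting observation is the coincidence of dimensions $\dim T_x M = m = \binom{m}{m-1} = \dim \wedge^{m-1} T_x M$, so the spaces of positive-definite symmetric bilinear forms on both ends have the same dimension $m(m+1)/2$. The heart of the argument is to invert the induction map (inner product on $T_xM$ $\mapsto$ induced inner product on $\wedge^{m-1} T_xM$) explicitly.

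Fixing $x\in\Gamma$, I would take an orthonormal basis $\{e_1,\ldots,e_m\}$ of $T_x M$ for the ambient Riemannian metric, together with the corresponding basis $\omega_i := e_1\wedge\cdots\wedge\hat{e}_i\wedge\cdots\wedge e_m$ of $\wedge^{m-1}T_x M$. Let $H=(H_{ij})$ be the symmetric positive-definite matrix of $[\cdot,\cdot]_*$ in $\{\omega_i\}$. If $G$ is the matrix of a candidate inner product $[\cdot,\cdot]$ in $\{e_i\}$, the determinant formula defining the induced inner product (Section~\ref{sec:ext-pow}) gives the induced matrix in $\{\omega_i\}$ as $\bigl(\det G^{\hat{i}\hat{j}}\bigr)_{ij}$, where $G^{\hat{i}\hat{j}}$ is the minor obtained by removing row $i$ and column $j$. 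Using $\cof(G)=\det(G)\,G^{-1}$ together with the symmetry of $G$, this rewrites as
\[
  \bigl(\det G^{\hat{i}\hat{j}}\bigr)_{ij} = D\,\det(G)\,G^{-1}\,D, \qquad D=\diag\bigl((-1)^1,(-1)^2,\ldots,(-1)^m\bigr).
\]

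Next, I would solve $H = D\,\det(G)\,G^{-1}\,D$ for $G$. Setting $c=\det(G)$ gives $G=c\,D\,H^{-1}\,D$; taking determinants then yields $c=c^{m}\det(H)^{-1}$, hence $c^{m-1}=\det(H)$, and
\[
  G := \det(H)^{1/(m-1)}\,D\,H^{-1}\,D.
\]
This matrix is symmetric (since $H^{-1}$ is and $D$ is diagonal) and positive definite (same reason, plus positivity of the scalar); substitution back reverses the computation of the previous step and shows that $G$ induces $H$. Uniqueness of the solution is built into the derivation, so the symmetric bilinear form $[\cdot,\cdot]$ represented by $G$ is intrinsic, independent of the chosen orthonormal basis.

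Finally, to globalize: the map $H\mapsto\det(H)^{1/(m-1)}\,D\,H^{-1}\,D$ is smooth on the open cone of positive-definite symmetric matrices, so the pointwise construction delivers a continuous (respectively smooth) bundle inner product on $T_\Gamma M$ whenever $[\cdot,\cdot]_*$ is continuous (respectively smooth). The main obstacle is exactly the algebraic inversion: because the induction map is nonlinear, equality of dimensions does not by itself imply bijectivity, and the proof hinges on producing the explicit inverse above; once that is in hand, the rest is routine verification.
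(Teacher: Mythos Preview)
Your proof is correct and follows a genuinely different route from the paper's. The paper writes the target inner product as $[u,v]_{**}=\langle Ju,Jv\rangle_*$ for some isomorphism $J$ of $\wedge^{m-1}T_xM$, then invokes the map $\varphi\colon A\mapsto\wedge^{m-1}A$ from $GL(T_xM)$ to $GL(\wedge^{m-1}T_xM)$ to find $A$ with $\wedge^{m-1}A=J$, and sets $[x,y]:=\langle Ax,Ay\rangle$. (The paper calls $\varphi$ a ``linear isomorphism''; strictly it is a group homomorphism, neither linear nor bijective when $m$ is odd---its image is then the positive-determinant component---but $J$ can always be chosen with $\det J>0$, so the argument survives.) You instead work directly with Gram matrices and invert the minor-matrix map $G\mapsto D\det(G)\,G^{-1}D$ by the closed formula $G=\det(H)^{1/(m-1)}\,DH^{-1}D$. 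Both routes ultimately rest on the cofactor identity behind Lemma~\ref{lemmatheoA}. What your approach buys is an explicit inverse, a one-line check of positive-definiteness (congruence by $D$ preserves it), and immediate smooth dependence on $H$; the paper's argument is more structural but leaves those verifications, and the regularity in $x$, implicit.
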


\begin{proof} 
Let $M$ be a riemannian manifold $m$-dimensional with $\langle \cdot,\cdot \rangle$ an inner product in $T_{\Gamma}M$, and $\langle \cdot,\cdot \rangle_{*}$ the inner product in $\wedge^{(m-1)} T_{\Gamma}M$ induced by $\langle \cdot,\cdot \rangle$.

Take $[\cdot,\cdot]_{**}$ an arbitrary inner product in  $\wedge^{(m-1)} T_{\Gamma}M$. Using that $[\cdot,\cdot]_{**}$ and $\langle \cdot, \cdot \rangle_{*}$ are inner products in $\wedge^{(m-1)} T_{\Gamma}M$ there exists $J:  \wedge^{(m-1)} T_{\Gamma}M \to \wedge^{(m-1)} T_{\Gamma}M$ isomorphism linear such that $[u,v]_{**} = \langle J(u), J(v) \rangle_{*}$.

Define $\varphi : GL(T_{\Gamma}M) \to GL(\wedge^{(m-1)} T_{\Gamma}M)$ given by $A \mapsto \wedge^{(m-1)} A$. 

Note that $\varphi$ is an injective linear homomorphism, and due to the dimensions of the spaces,  $\varphi$ is a linear isomorphism.

Hence, there exists $A \in   GL(T_{\Gamma}M)$ such that  $\wedge^{(m-1)} A = J$. 

Consider $[x,y] :=\langle A(x), A(y) \rangle$ for $x, y \in T_{\Gamma}M$, then $[u,v]_{*} = \det([u_{i},v_{j}])_{(m-1)\times (m-1)}$, where $u = u_{1} \wedge ... \wedge u_{(m-1)}$ and $v = v_{1} \wedge ... \wedge v_{(m-1)}$. 

We have that 
$$
[u,v]_{*} = \det(\langle A(u_{i}),A(v_{j})\rangle)_{(m-1)\times (m-1)}.
$$

On the other hand, 
$$
[u,v]_{**} = \langle \wedge^{(m-1)} A (u), \wedge^{(m-1)} A (v) \rangle_{*}= \det(\langle A(u_{i}),A(v_{j})\rangle)_{(m-1)\times (m-1)}.
$$

Therefore,  $[\cdot,\cdot]_{*} = [\cdot,\cdot]_{**}$, and we are done. 
\end{proof}


\section{Proofs of main results}\label{sec:proof}

\subsection{Proof of Theorem ~\ref{mthm:sec-exp-md}}

\begin{proof}
Consider $M$ is a $m$-manifold and $\Gamma$ is a compact $X_t$-invariant subset having a singular-hyperbolic splitting $T_\Gamma M=E_\Gamma\oplus
F_\Gamma$.  By Theorem~\ref{bivectparthyp2} we have a $\wedge^{(m-1)}DX_t$-invariant partial hyperbolic splitting $\wedge^{(m-1)} T_\Gamma M=\widetilde
E\oplus \widetilde F$ with $\dim\widetilde F=1$ and $\widetilde F$ uniformly expanded. Following the proof of Theorem~\ref{bivectparthyp2}, if we write
$e$ for a unit vector in $E_x$ and $\{u_{1},u_{2},...,u_{m-1}\}$ an orthonormal base for $F_x$, $x\in\Gamma$, then $\widetilde E_x$ is a
$(m-1)$-dimensional vector space spanned by  set $\{ e \wedge u_{i_{1}}\wedge u_{i_{2}} \wedge ... \wedge u_{i_{m-2}}\}$ with $i_{1},...,i_{m-2} \in
\{1,...,m-1\}$.

From Theorem~\ref{mthm:Jseparated-parthyp} and the existence of adapted metrics (see e.g.  \cite{Goum07}), there exists a field $\J$ of quadratic forms
so that $X$ is $\J$-non-negative, $DX_t$ is strictly $\J$-separated on a neighborhood $U$ of $\Gamma$, $E_\Gamma$ is a negative subbundle, $F_\Gamma$ is
a positive subbundle and these subspaces are almost orthogonal. In other words, there exists a function $\delta:\Gamma\to\RR$ such that
$\tilde\J_x-\delta(x)\J_x>0, x\in\Gamma$ and we can locally write $\J(v)=\langle J(v),v\rangle$ where $J=\diag\{-1,1,...,1\}$ with respect to the basis
$\{e,u_{1},...,u_{m-1}\}$ and $\langle\cdot,\cdot\rangle$ is the adapted inner product; see \cite{arsal2012a}.

By lemma \ref{lemmatheoA}, $\wedge^{(m-1)} A=\det(A)\cdot (A^{-1})^*$ with respect to the adapted inner product which trivializes $\J$, for any linear
transformation $A:T_xM\to T_yM$. Hence $\wedge^{m-1}DX_t(x)=\det (DX_t(x))\cdot (DX_{-t}\circ X_t)^*$ and a straightforward calculation shows that the
infinitesimal generator $D^{(m-1)}(x)$ of $\wedge^{(m-1)}DX_t$ equals $\trace(DX(x))\cdot Id - DX(x)^*$.

Therefore, using the identification between $\wedge^{(m-1)} T_xM$ and $T_xM$ through the adapted inner product and
Proposition~\ref{pr:J-separated-tildeJ} \begin{align}
  \hat\J_x=\partial_t(-\J)(\wedge^{(m-1)}DX_t\cdot
  v)\mid_{t=0} &= \langle-(J\cdot D^{(m-1)}(x)+D^{(m-1)}(x)^*\cdot
  J)v,v\rangle\nonumber
  \\
  &= \langle [(\J\cdot
  DX(x)+DX(x)^*\cdot\J)-2\trace(DX(x))\J]v,v\rangle\nonumber
  \\
  &= (\tilde\J-2\trace(DX(x))\J)(v).\label{eq:square-generator}
\end{align} To obtain strict $(-\J)$-separation of $\wedge^{(m-1)}DX_t$ we search a function $\delta_{(m-1)}:\Gamma\to\RR$  so that \begin{align*}
  (\tilde\J-2\trace(DX)\J) - \delta_{(m-1)}(-\J)>0
  \quad\text{or}\quad
  \tilde\J-(2\trace(DX)-\delta_{(m-1)})\J>0.
\end{align*} Hence it is enough to make $\delta_{(m-1)}=2\trace(DX)-\delta$. This shows that in our setting $\wedge^{(m-1)}DX_t$ is always strictly
$(-\J)$-separated.

Finally, according to Theorem~\ref{pr:char-dom-split}, to obtain the partial hyperbolic splitting of $\wedge^{(m-1)}DX_t$ which ensures
singular-hyperbolicity, it is 
sufficient that either $\widetilde\Delta_a^b(x)=\int_a^b\delta_{(m-1)}(X_s(x))\,ds$ satisfies item (1) of Proposition~\ref{pr:char-dom-split} or
$\hat\J_x$ is positive definite, for all $x\in\Gamma$. This amounts precisely to the 
sufficient condition in the statement of Theorem~\ref{mthm:sec-exp-md} and we are done.
\end{proof}

\subsection{Proof of Theorem ~\ref{mthm1}} \begin{proof} Let singular-hyperbolic  set $\Gamma$ for vector field with a splitting $E\oplus F$ where $E$
is uniformly contracted and $F$ is volume expanding.

Suppose that $T_\Gamma M$ admits a splitting $E_\Gamma\oplus F_\Gamma$ with $\dim E_\Gamma=1$
  and $\dim F_\Gamma=k=m-1$.
 		
We note that if $E\oplus F$ is a $DX_t$-invariant splitting of $T_\Gamma M$, with $\{e_1\}$ a basis for $E$ and $\{f_1,\dots,f_k\}$ a family of basis
for $F$, then $\widetilde F=\wedge^kF$ generated by $\{f_{i_1}\wedge\dots\wedge f_{i_k}\}_{1\le i_1<\dots<i_k\le k}$ is naturally
$\wedge^kDX_t$-invariant by construction. Then, the dimension of $\widetilde F$ is one with basis given by the vector $f_1\wedge ... \wedge f_k$.

 By corollary \ref{bivectparthyp}, we have a partially hyperbolic splitting $\widetilde E \oplus \widetilde F$
 for $\wedge^{k} DX_t$ such that $\widetilde F$ is uniformly expanded by $\wedge^{k} DX_t$. Hence, from
 \cite[Theorem 1]{Goum07} , there
exists an adapted inner product $\langle\cdot,\cdot\rangle_{*}$ for $\wedge^{k} DX_t$. There exists   $\lambda>0$ satisfying for all $x\in \Gamma$ and
$t>0$ such that  $||\wedge^{k} DX_t\mid_{ \widetilde F_x}|| \geq e^{\lambda t}$ for all $t>0$.

By Lemma \ref{lemma1}, $\langle\cdot,\cdot\rangle_{*}$ is  induced by an inner product $\langle \cdot , \cdot \rangle$ in $T_{\Gamma}M$. So, we have a
partially hyperbolic splitting $\widetilde E \oplus \widetilde F$  for $\wedge^{k} DX_t$ such that $\widetilde F$ is uniformly expanded by $\wedge^{k}
DX_t$. By Theorem \ref{bivectparthyp2}, we have that $E\oplus F$ is a dominated splitting for $DX_t$. From Theorem \ref{theo2012}, there exists $C^{1}$
field of quadratic $\J$ such that  $DX_{t}$ is strictly $\J$-separated.

But  $DX_{t}$ is strictly $\J$-separated, this ensures, in particular, that the norm

$|w|=\xi\sqrt{\J(w_E)^2+\J(w_F)^2}$ is adapted to the dominated splitting $E\oplus F$ for the cocycle $DX_t$, where $w=w_E+w_F\in E_x\oplus F_x,
x\in\Gamma, \ \textrm{and} \ \xi$ is an arbitrary positive constant; see \cite[Section 4.1]{arsal2012a}. This means that there exists $\mu>0$ such that
$|DX_t\mid_{E_x}|\cdot|DX_{-t}\mid_{F_{X_t(x)}}|\le e^{-\mu
  t}$ for all $t>0$.

Moreover, from the definition of the inner product and $\wedge$, it follows that

$|\det (DX_t\mid_{F_x})|=\|(\wedge^k DX_t)(u_{1}\wedge ... \wedge u_{k})\|=\|(\wedge^k DX_t)\mid_{\widetilde F}\|\ge e^{\lambda t}$ for all $t>0$, so
$|\cdot|$ is adapted to the volume expanding along $F$.

To conclude, we are left to show that $E$ admits a constant $\omega>0$ such that $|DX_t\mid_{E}|\le e^{-\omega t}$ for all $t>0$.

But since $E$ is uniformly contracted, we know that $X(x)\in F_x$ for all $x\in\Gamma$.

\begin{lemma}
  \label{le:flow-center}
  Let $\Gamma$ be a compact invariant set for a flow
  $X$ of a $C^1$ vector field $X$ on $M$.  Given a
  continuous splitting $T_\Gamma M = E\oplus F$ such
  that $E$ is uniformly contracted, then $X(x)\in F_x$
  for all $x\in \Lambda$.
\end{lemma}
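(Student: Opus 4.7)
The plan is to argue by contradiction using the uniform contraction along $E$ together with the compactness of $\Gamma$ and the fact that the vector field direction is preserved by $DX_t$. Specifically, for each $x\in\Gamma$ write $X(x)=E_X(x)+F_X(x)$ with $E_X(x)\in E_x$ and $F_X(x)\in F_x$ according to the given continuous splitting. Since the splitting is $DX_t$-invariant and since $DX_t\cdot X(x)=X(X_t(x))$ (the vector field is preserved under its own flow), both $E_X$ and $F_X$ are $DX_t$-invariant continuous sections: $DX_t\cdot E_X(x)=E_X(X_t(x))$ and $DX_t\cdot F_X(x)=F_X(X_t(x))$ for all $x\in\Gamma$, $t\in\RR$.

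Next I would exploit that $\Gamma$ is compact and invariant under the complete flow. Continuity of $E_X$ on $\Gamma$ yields a uniform bound $M:=\sup_{y\in\Gamma}\|E_X(y)\|<\infty$. Fix any $x\in\Gamma$ and set $y_t:=X_{-t}(x)\in\Gamma$ for $t>0$; then by invariance of $E_X$,
\begin{equation*}
E_X(x)=DX_t(y_t)\cdot E_X(y_t),
\end{equation*}
so using the uniform contraction hypothesis $\|DX_t|_{E_{y_t}}\|\le C e^{-\lambda t}$ we obtain
\begin{equation*}
\|E_X(x)\|\le \|DX_t|_{E_{y_t}}\|\cdot\|E_X(y_t)\|\le CM\,e^{-\lambda t}.
\end{equation*}
Letting $t\to+\infty$ forces $E_X(x)=0$, hence $X(x)=F_X(x)\in F_x$. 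Since $x\in\Gamma$ was arbitrary, this proves the claim.

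The main obstacle is really a conceptual one rather than a computational one: one must be careful that the invariance $DX_t\cdot X(x)=X(X_t(x))$ combined with the invariance of the splitting $E\oplus F$ does force the $E$-component of $X$ to be itself a $DX_t$-invariant continuous section along orbits, so that one may freely pull it back along the negative flow. This requires that $\Gamma$ be invariant under $X_{-t}$ for all $t>0$ as well, which holds since $\Gamma$ is $X_t$-invariant for all $t\in\RR$ (in the singular-hyperbolic setting of Theorem~\ref{mthm1}, $\Gamma$ is the maximal invariant set so this is automatic). Once these two ingredients, invariance of the decomposition of $X$ and compactness giving a uniform bound $M$, are in place, the exponential contraction trivially delivers the conclusion.
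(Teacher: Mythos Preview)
Your argument is correct and is precisely the standard proof of this well-known fact. Note, however, that the paper does not actually supply its own proof of this lemma: immediately after the statement it simply refers the reader to \cite[Lemma~5.1]{ArArbSal} and \cite[Lemma~3.3]{arsal2012a}. The proof in those references proceeds along the same lines as yours: decompose $X(x)$ according to $E_x\oplus F_x$, use $DX_t\cdot X(x)=X(X_t(x))$ together with invariance of the splitting to see that the $E$-component is itself an invariant section, bound it uniformly by compactness of $\Gamma$, and then let the exponential contraction along $E$ kill it by pulling back along the (fully invariant) negative flow. So there is nothing to compare: your write-up is essentially a self-contained version of the cited argument.
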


See \cite[Lemma 5.1]{ArArbSal} and \cite[Lemma 3.3]{arsal2012a}.

On the one hand, on each non-singular point $x$ of $\Gamma$ we obtain for each $w\in E_x$
  \begin{align*}
    e^{-\mu t}\ge \frac{|DX_t\cdot w|}{|DX_t\cdot X(x)|}
    =
    \frac{|DX_t\cdot w|}{|X(X_t(x))|}
    \ge
    \frac{|DX_t\cdot w|}{\sup\{|X(z)|:z\in\Gamma\}}
    \ge
    |DX_t\cdot w|,
  \end{align*}
  since we can always choose a small enough constant $\xi>0$
  in such a way that $\sup\{|X(z)|:z\in\Gamma\}\le1$. We
  note that the choice of the positive constant $\xi$ does
  not change any of the previous relations involving
  $|\cdot|$.

  On the other hand, for $\sigma\in\Gamma$ such that
  $X(\sigma)=0$, we fix $t>0$ and, since $\Gamma$
  is a non-trivial invariant set, we can find a
  sequence $x_n\to\sigma$ of regular points of
  $\Gamma$. The continuity of the derivative cocycle
  ensures $|DX_t\mid_{E_\sigma}|=\lim_{n\to\infty}|DX_t\mid_{E_{x_n}}|\le
  e^{-\lambda t}$. Since $t>0$ was arbitrarily chosen,
  we see that $|\cdot|$ is adapted for the contraction
  along $E_\sigma$.

  This shows that $\lambda=\mu$ and completes the proof.
\end{proof}



\def\cprime{$'$}

  \bibliographystyle{abbrv}

\end{document}